\let\BFseries\bfseries\def\bfseries{\BFseries\mathversion{bold}} 
\DeclareMathSymbol{\leqslant}{\mathalpha}{AMSa}{"36} 
\DeclareMathSymbol{\geslant}{\mathalpha}{AMSa}{"3E} 
\DeclareMathSymbol{\eset}{\mathalpha}{AMSb}{"3F}     
\renewcommand{\leq}{\;\leqslant\;}                   
\renewcommand{\geq}{\;\geslant\;}                   
\renewcommand{\le}{\;\leqslant\;}                   
\renewcommand{\ge}{\;\geslant\;}                   
\newcommand{\dd}{\,\text{\rm d}}             
\newcommand{\ind}{1\hspace{-0.098cm}\mathrm{l}}
\newcommand{\cH}{\mathcal{H}}
\newcommand{\cI}{\mathcal{I}}
\newcommand{\cX}{\mathcal{X}}
\newcommand{\eps}{\varepsilon}
\newcommand{\sig}{\sigma}
\def\V{\mathbb{V}}                          
\newcommand{\var}{\V}
\newcommand{\IP}{\mathbb{P}}
\newcommand{\IN}{\mathbb{N}_0}
\newcommand{\IR}{\mathbb{R}}
\newcommand{\IE}{\mathbb{E}}
\newcommand{\be}{\begin{eqnarray*}}
\newcommand{\ee}{\end{eqnarray*}}
\newcommand{\ben}{\begin{eqnarray}}
\newcommand{\een}{\end{eqnarray}}
\theoremstyle{plain}
\newtheorem{theo}{Theorem}[section]
\newtheorem{lemma}[theo]{Lemma}
\newtheorem{propo}[theo]{Proposition}
\newtheorem{corollary}[theo]{Corollary}
\theoremstyle{definition}
\newtheorem{remark}[theo]{Remark}
\newtheorem{example}[theo]{Example}
\renewenvironment{proof}[1][] {\smallskip \noindent {\bf Proof#1.} }{\hspace*{\fill}$\square$\medskip\par}
\def\P{{\bf {\mathbb{P}}}}
\newcommand{\pr}[1]{\P(#1)}
\newcommand{\e}{\'{e}}
\def\E{\mathbb{E}}
\newcommand{\norm}[1]{\left\|#1\right\|}
\newcommand{\indi}[1]{\,\ind_{\{#1\}}}
\def\R{\IR}
\def\N{\mathbb{N}_0}
\newcommand{\corr}{\operatorname*{corr}}
\begin{document}
\vglue20pt \centerline{\huge\bf Universality of the asymptotics}
\bigskip

\centerline{\huge\bf  of the one-sided exit problem}
\bigskip

\centerline{\huge\bf  for integrated processes}

\bigskip
\bigskip

\centerline{by}
\bigskip
\medskip

\centerline{{\Large Frank Aurzada and Steffen Dereich}}
\bigskip

\begin{center}\it
Technische Universit\"at Berlin\\
Institut f\"ur Mathematik, MA 7-4 \\
Stra\ss e des 17.\ Juni 136, 10623 Berlin\\
aurzada@math.tu-berlin.de\\
~\\
Philipps-Universit\"at Marburg\\
Fb.\ 12 - Mathematik und Informatik\\
Hans-Meerwein-Stra\ss e, 35032 Marburg\\
dereich@mathematik.uni-marburg.de
\end{center}

\bigskip
\begin{center} \today
\end{center}
\bigskip
\bigskip
\bigskip

{\leftskip=1truecm \rightskip=1truecm \baselineskip=15pt \small

 \noindent{\slshape\bfseries Summary.} We consider the one-sided exit problem for (fractionally) integrated random walks and L\'{e}vy processes. We prove that the rate of decrease of the non-exit probability -- the so-called survival exponent -- is universal in this class of processes. In particular, the survival exponent can be inferred from the (fractionally) integrated Brownian motion.

This, in particular, extends Sinai's result on the survival exponent for the integrated simple random walk to general random walks with some finite exponential moment.

Further, we prove existence and monotonicity of the survival exponent of fractionally integrated processes. We show that this exponent is related to a constant appearing in the study of random polynomials.

\bigskip

\noindent{\slshape\bfseries Keywords.}  Area process; FKG inequality; integrated Brownian motion; integrated L\'{e}vy process; integrated random walk; lower tail probability;  moving boundary; one-sided barrier problem; one-sided exit problem; survival exponent

\bigskip

\noindent{\slshape\bfseries 2000 Mathematics Subject
Classification.}  60G51; 60J65; 60G15;  60G18

\bigskip

\noindent{\slshape\bfseries Running Head.} One-sided exit problem for integrated processes

}

\newpage
\section{Introduction}
\subsection{Statement of the problem}
This article deals with the so-called one-sided exit problem -- also called one-sided barrier problem. For a real-valued stochastic process $(A_t)_{t\ge 0}$ one investigates whether there is a $\theta>0$ such that
\begin{equation} \label{eqn:exit}
\pr{  \sup_{t\in[0,T]} A_t \leq 1  } =  T^{-\theta+o(1)},\qquad \text{as $T\to\infty$.}
\end{equation}
If such an exponent $\theta$ exists it is called the \emph{survival exponent}. The function $F(s)\equiv 1$ acts as a barrier, which the process must not pass. We also discuss different barriers $F$ below.

If the process is self-similar, i.e.\ $(A_{ct})$ and $(c^H A_t)$ have the same finite-dimensional distributions for some $H>0$, the problem is equivalent to the so-called lower tail probability problem:
\begin{equation} \label{eqn:lowertail}
\pr{  \sup_{t\in[0,1]} A_t \leq \eps  } =  \eps^{\theta/H+o(1)},\qquad \text{as $\eps\to0$.}
\end{equation}

Apart from this, we also look at the discrete version of (\ref{eqn:exit}):
\begin{equation} \label{eqn:exitdiscrete}
\pr{  \sup_{n=1,\ldots, N} A_n \leq 1  } =  N^{-\theta+o(1)},\qquad \text{as $N\to\infty$,}
\end{equation}
where $(A_n)_{n\in\IN}$ is a discrete time random process.

Obviously, problems (\ref{eqn:exit}), (\ref{eqn:lowertail}), and (\ref{eqn:exitdiscrete}) are classical questions. They are relevant in a number of quite different applications (see Section~\ref{sec:related}). It is therefore surprising that very little seems to be known about them.

In fact, for (\ref{eqn:exit}) the exponent is known in the following cases: Brownian motion ($\theta=1/2$, trivially obtained via the reflection principle), integrated Brownian motion ($\theta=1/4$, \cite{mckean1963,goldman1971,lachal1991,sinai1991}, see also \cite{groeneboometal1999,lachal1992}), fractional Brownian motion ($\theta=1-H$, \cite{molchan1999,molchan1999b,lishao2004aop}), and some L\e vy processes (see e.g.\ \cite{baxterdonsker,bingham1973}).

It is even more surprising that for the discrete version (\ref{eqn:exitdiscrete}) yet less seems to be known. The only cases where the exponent was calculated are general random walks (e.g.\ $\theta=1/2$ if the increments are centered, see for example \cite{F}) and the integrated \emph{simple} random walk ($\theta=1/4$, \cite{sinai1991}). Bounds for general integrated random walks are given in \cite{caravennadeuschel}, polynomial bounds for integrated Gaussian random walks can be obtained from \cite{lishao2004aop}. In several further special cases, Vysotsky \cite{vlad} obtained $\theta=1/4$. It was conjectured (\cite{caravennadeuschel}) that for any integrated random walk with finite variance the exponent is $\theta=1/4$.

The focus of the present article is on integrated processes and integrated random walks: i.e.\ $A=\cI(X)$ with $\cI$ some integration operator and $X$ a L\'{e}vy martingale or centered random walk. The main motivation for this work was that the exponent was known for integrated Brownian motion, but not for general integrated random walks. We will show that indeed the exponent is $\theta=1/4$ under mild assumptions. We stress that the processes that we considered are non-Markovian.

\subsection{Main results} \label{sec:result}
The goal of this article is to investigate the asymptotics of
\begin{equation}  \label{eqn:todo}
\pr{  \sup_{t\in J\cap [0,T]} A_t \leq 1  }, \qquad \text{as $T\to\infty$,}
\end{equation}
for $J=\N$ or $J=[0,\infty)$. We show the following:
\begin{itemize}
 \item For a fixed integration operator $\cI$, the asymptotics of this probability for $A=\cI(X)$ is universal over the class of L\'{e}vy processes and random walks $X$. The reason for this is that all of these processes can be coupled with a suitable Brownian motion. The resulting order of (\ref{eqn:todo}) can then be inferred from Brownian motion or any other process in this class, such as the simple random walk.
 \item The existence of the survial exponent can be established for the case of fractionally integrated processes. This survival exponent is shown to be decreasing with respect to higher integration. As a byproduct we show that the survival exponent of fractionally integrated Brownian motion (also called Riemann-Liouville process) is not the same as for the corresponding fractional Brownian motion (FBM).
 \item We show a certain robustness concerning the change of the barrier, which is equivalent to adding a drift to the process. In fact, adding a drift to a Gaussian process that is in its reproducing kernel Hilbert space, does not change the survival exponent of that process.
 \item We exploit the connection of the one-sided exit problem to random polynomials established in \cite{demboetal,lishao2004aop} in order to improve the knowledge of the crucial constant appearing there.
\end{itemize}


\bigskip
Let us be more precise. We let $\cX$ denote the class of all (non-deterministic, right-continuous) martingales $(X_t)_{t\geq 0}$ with independent and stationary increments, $X_0=0$, satisfying
$$
\IE [ e^{\beta |X_1|} ] <\infty,\qquad \text{for some $\beta>0$.}
$$
If the martingale is only defined on $\IN$, we set $X_t:=X_{\lfloor t\rfloor}$ for all $t\geq 0$.

Let us further specify the type of functionals $\cI$ that we consider. We let $\cI$ be a functional of the following convolution type:
$$
\cI(X)_t = \int_0^t K(t-s) X_s \dd s,\qquad t\ge 0,
$$
where $K : [0,\infty) \to [0,\infty)$ is a measurable function  satisfying
\begin{align} \label{eqn:assI1}
K(s) \le k [s^{\alpha-1}+s^{\beta-1}]
\end{align}
for positive constants $k$, $\alpha$, and $\beta$ with $\alpha\ge\beta$.
Additionally, we need to impose a regularity assumption on the tail behavior of $K$. Here, we assume that either $K$ is weakly equivalent to a regularly varying function or it is assumed to be ultimately decreasing. We remark that this technical assumption can be further relaxed.

The main example is the integration operator:
$$
\cI_1(X)_t := \int_0^t X_s \dd s, \qquad t\ge 0,
$$
where $K(s)\equiv 1$, but our definition also includes fractional integration operators
\begin{equation} \label{eqn:fracintop}
\cI_\alpha(X)_t := \int_0^t \frac{1}{\Gamma(\alpha)} \, (t-s)^{\alpha-1} X_s \dd s,\qquad t\ge 0,
\end{equation}
where $\alpha>0$ and $\Gamma$ denotes Euler's Gamma function. In particular, if $\alpha$ is an integer, $\cI_\alpha(X)$ is the $\alpha$-times integrated process. For completeness we set $\cI_0$ to be the identity; and we recall that $\cI_\alpha \circ \cI_\beta = \cI_{\alpha+\beta}$ for $\alpha,\beta\geq 0$.

Finally, in order to formulate the main result we call functions $f,g:I\to\IR$ weakly-$\log$-equivalent, if there exists a $\delta>0$ such that
$$
(\log T)^{-\delta} g(T) \precsim f(T)\precsim (\log T)^\delta  g(T).
$$
In that case we briefly write $f\asymp_{\log} g$. Note that $\asymp_{\log}$ defines an equivalence relation. Here and below we use $f \precsim g$ (or $g\succsim f$) if $\limsup f/g < \infty$ and $f\approx g$ if $f\precsim g$ and $g\precsim f$. Further, $f\lesssim g$ (or $g\gtrsim f$) means $\limsup f/g \leq 1$, and $f \sim g$ means that $f\lesssim g$ and $g\lesssim f$.

\bigskip
Using this notation, our main theorem reads as follows.

\begin{theo}\label{thm:main}
Let $(X_t)_{t\geq 0}$ and $(Y_t)_{t\geq 0}$ be two processes from the class $\cX$. Then, for either $J=\N$ or $J=[0,\infty)$,
$$
\pr{ \sup_{t\in J\cap[0,T]} \cI(X)_t \leq 1} \asymp_{\log}  \pr{ \sup_{t\in J\cap [0,T]} \cI(Y)_t \leq 1}.
$$
That means the asymptotics of all processes in the class $\cX$ are equivalent with respect to $\asymp_{\log}$.
\end{theo}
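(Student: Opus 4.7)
By transitivity of $\asymp_{\log}$, it suffices to show that every $X\in\cX$ can be compared with a Brownian motion $W$ having $\var(W_1)=\var(X_1)$. The exponential moment hypothesis $\IE\,e^{\beta|X_1|}<\infty$ is exactly what the Koml\'os--Major--Tusn\'ady strong invariance principle demands (for random walks; its L\'evy-process analogue, due to Einmahl/Zaitsev, covers the continuous case). Accordingly, one can realise $X$ and $W$ on a common probability space so that
$$
\pr{\sup_{0\leq s\leq T}|X_s-W_s|\geq c\log T+y}\leq Ke^{-\lambda y}\qquad(y>0),
$$
for constants $c,K,\lambda>0$ depending only on the law of $X_1$. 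In particular $\sup_{s\leq T}|X_s-W_s|\leq c'\log T$ outside an event of probability $T^{-A}$, with $A$ arbitrarily large.

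The convolutional form of $\cI$ together with the kernel bound $K(s)\leq k(s^{\alpha-1}+s^{\beta-1})$ then propagates the coupling estimate to the functional:
$$
\sup_{0\leq t\leq T}|\cI(X)_t-\cI(W)_t|\leq\Bigl(\sup_{s\leq T}|X_s-W_s|\Bigr)\int_0^T K(s)\dd s\leq c''(\log T)(T^\alpha+T^\beta)=:\delta_T.
$$
Hence, up to the negligible coupling-failure event, the event $\{\sup\cI(X)\leq 1\}$ is sandwiched between $\{\sup\cI(W)\leq 1-\delta_T\}$ and $\{\sup\cI(W)\leq 1+\delta_T\}$, and symmetrically after swapping the roles of $X$ and $W$.

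The main obstacle is that $\delta_T$ is only polynomial in $T$, whereas the conclusion $\asymp_{\log}$ demands a polylog-tight comparison. A brute-force barrier-shift combined with the $H=\alpha+\tfrac12$ self-similarity of $\cI(W)$ gives merely a bound of order $T^{-\theta/(2H)+o(1)}$, which is far weaker than the desired $T^{-\theta+o(1)}$. To bridge this gap I would reinterpret $\cI(X-W)$ as a random drift acting on the Gaussian process $\cI(W)$ and combine the KMT coupling with the drift-robustness result for Gaussian processes announced in Section~\ref{sec:result}: adding a drift lying in the reproducing kernel Hilbert space of $\cI(W)$ alters the survival probability only through a Cameron--Martin density whose moments are polylog-tight once the RKHS norm of the drift is polylog. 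On the good coupling event $\cI(X-W)$ has RKHS norm of polylog size because $X-W$ is mean-zero with sharp exponential tails under KMT. The FKG inequality (listed in the paper's keywords) should furnish the matching lower bound via a positive-association argument for the family of events $\{\cI(W)_t\leq 1\}$. Quantitatively assembling these ingredients into the two-sided polylog estimate is the technically most delicate step of the proof.
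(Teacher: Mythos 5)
Your opening moves coincide with the paper's: the KMT coupling giving $\sup_{s\le T}|X_s-\sig W_s|\le a_T$ with $a_T\asymp\log T$ outside an event of probability $T^{-\rho}$, and the correct diagnosis that propagating this through $\cI$ yields a barrier shift $\delta_T$ of polynomial size, whence only the useless bound $T^{-\theta/(2H)+o(1)}$. The gap lies entirely in your proposed repair. Proposition~\ref{propo:drift} (Cameron--Martin) applies to a \emph{deterministic} shift $f$ in the reproducing kernel Hilbert space; $\cI(X-\sig W)$ is a random path, and under the coupling it is strongly correlated with $\cI(W)$, so no change-of-measure argument of that form is available. Worse, $X-\sig W$ almost surely does not lie in the RKHS of $W$ at all: membership requires an absolutely continuous path with square-integrable derivative, while a path of $X-\sig W$ has jumps or is nowhere differentiable. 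The KMT sup-norm bound $c\log T$ gives no control on any RKHS norm, so the claim that ``$\cI(X-W)$ has RKHS norm of polylog size'' is unfounded; and the appeal to FKG for the family $\{\cI(W)_t\le 1\}$ does not produce a lower bound for the comparison between $X$ and $W$.

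The paper's actual mechanism for absorbing the $\log T$-sized coupling error is different and is the heart of the proof. It sacrifices an initial window $[0,T_0]$ with $T_0\asymp(\log T)^2$, on which $X$ is forced below the linear barrier $1-t/\sqrt{T_0}+a_T$, hence below $-a_T-\delta_2\sig$ at time $T_0$ (event $E_1$, handled via Lemma~\ref{lem:apbm} for the coupled Brownian motion plus the coupling bound), while simultaneously staying below $c_1T_0^{-\alpha}$ so that the past contributes nonpositively to $\cI(X)_t$ for $t\ge T_0$ (event $E_2$, handled by the Skorokhod-embedding a priori estimate, Proposition~\ref{prop:skor}). Theorem~\ref{thm:fkg} (FKG for independent increments) is used precisely to multiply the probabilities of these two decreasing events, at total cost $T_0^{-(\alpha+1)}\approx(\log T)^{-2(\alpha+1)}$. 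After time $T_0$, the constant offset $-\delta_2\sig$, integrated against $K$ over a window of length $\delta_1$ chosen with $\delta_2\int_0^{\delta_1}K(s)\dd s\ge1$, dominates both the coupling error $a_T$ and the $+1$ allowed to $\cI(W_{\cdot+T_0}-W_{T_0})$ on the remaining interval. Some device of this kind---trading a $\log T$ additive error on the path for a polylogarithmic multiplicative loss in probability---is indispensable, and your outline does not supply one.
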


This is the mentioned universality result. In particular, the survival exponent (if it exists) is universal over the class $\cX$. The proof of Theorem~\ref{thm:main} is given in Section~\ref{sec:proofmain1}. In fact, we shall prove a more precise result that gives us control on the logarithmic loss between two processes.

\bigskip
A particularly important case is when $\cI$ is the usual integration operator. Then the rate of the survival probability is known for $X$ being Brownian motion or $X$ being the simple random walk. Our main result entails the following corollary for general random walks.

\begin{corollary} Let $X_1, X_2,\ldots$ be a random walk started in $0$ with $\E[ e^{\beta |X_1|}]<\infty$ for some $\beta>0$ and  with $\E[X_1]=0$. Set $A_n=\sum_{i=1}^n X_i$. Then, as $N\to\infty$,
$$
(\log N)^{-4} N^{-1/4} \precsim \pr{ \sup_{n=1,\ldots,N} A_n \leq 1 } \precsim (\log N)^{4} N^{-1/4}.
$$
\end{corollary}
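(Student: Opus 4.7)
My plan is to derive the Corollary as a direct application of Theorem~\ref{thm:main} (in the more quantitative form alluded to after its statement) with $\cI=\cI_1$, the usual integration operator. First I would observe that the random walk $X$, extended to continuous time by $X_t:=X_{\lfloor t\rfloor}$, belongs to the class $\cX$: it is a right-continuous centered martingale with stationary independent increments and an exponential moment. For this piecewise-constant extension,
$$
\cI_1(X)_{n+1} \;=\; \int_0^{n+1} X_{\lfloor s\rfloor}\dd s \;=\; \sum_{k=0}^{n} X_k \;=\; A_n,
$$
using $X_0=0$. Since $\cI_1(X)_1=0\leq 1$ automatically, this gives the identification
$$
\pr{\sup_{n=1,\dots,N} A_n \leq 1} \;=\; \pr{\sup_{m\in\N,\,1\leq m\leq N+1} \cI_1(X)_m \leq 1},
$$
so the left-hand side is exactly of the form to which Theorem~\ref{thm:main} applies (with $J=\N$).

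Next I would invoke Theorem~\ref{thm:main} with $Y$ a standard Brownian motion $B$ to obtain
$$
\pr{\sup_{m\in\N,\,1\leq m\leq N+1} \cI_1(X)_m \leq 1} \;\asymp_{\log}\; \pr{\sup_{m\in\N,\,1\leq m\leq N+1} \cI_1(B)_m \leq 1}.
$$
The right-hand side is in turn comparable, up to constants, to the continuous-time analogue $\pr{\sup_{t\in[0,N+1]} \cI_1(B)_t \leq 1}$, because integrated Brownian motion has $C^1$ paths and the oscillation between integer times is easily controlled by standard Gaussian estimates. The classical asymptotics of McKean, Goldman, Sinai and Lachal then give $\pr{\sup_{t\in[0,T]} \cI_1(B)_t \leq 1} \asymp T^{-1/4}$, without any logarithmic correction. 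Chaining these three comparisons yields
$$
\pr{\sup_{n=1,\dots,N} A_n \leq 1} \;\asymp_{\log}\; N^{-1/4},
$$
which is precisely the two-sided bound asserted.

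The main obstacle is not in the Corollary itself but in extracting the explicit exponent $\delta=4$ from Theorem~\ref{thm:main}, whose stated form only guarantees \emph{some} $\delta>0$. Pinning down $\delta=4$ requires the quantitative strengthening promised in the remark after Theorem~\ref{thm:main}: one must track the log-loss in a coupling of $X$ with Brownian motion (most naturally via a Koml\'os--Major--Tusn\'ady-type strong approximation with an $O(\log N)$ error) as it propagates through the time-domain convolution with $K\equiv 1$ and the supremum over $\{1,\dots,N\}$. Granting that refinement (to be proven in Section~\ref{sec:proofmain1}), the above argument gives the Corollary immediately.
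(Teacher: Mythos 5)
Your overall strategy is the one the paper intends: identify $\sup_{n\le N}A_n$ with the integer-time supremum of $\cI_1(X)$ for the piecewise-constant extension of the walk, invoke the quantitative form of Theorem~\ref{thm:main} (namely Theorem~\ref{theo:logestimates}, which for $K\equiv 1$, i.e.\ $\alpha=1$, gives exactly the exponent $2(1+\alpha)=4$ on the logarithm), and then use the classical two-sided $T^{-1/4}$ asymptotics for integrated Brownian motion. Up to that point the proposal is correct, and your concern about where $\delta=4$ comes from is indeed settled by Theorem~\ref{theo:logestimates}.

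The gap is the step where you pass from the discrete supremum of $\cI_1(B)$ over $\N\cap[0,N+1]$ to the continuous supremum over $[0,N+1]$, which you dismiss as ``easily controlled by standard Gaussian estimates'' using the $C^1$ regularity of integrated Brownian motion. This is not easy: the derivative of $\cI_1(B)$ at time $t$ is $B_t$, so the oscillation of $\cI_1(B)$ over a unit interval near time $N$ is typically of order $\sqrt N$, not $O(1)$. The natural oscillation/union bound therefore only yields
$$
\pr{\max_{m\in\N\cap[0,N]}\cI_1(B)_m\le 1}\;\le\;\pr{\sup_{t\in[0,N]}\cI_1(B)_t\le C\sqrt{N\log N}}+o(N^{-1/4})\;\approx\;N^{-1/6+o(1)},
$$
using the $3/2$-self-similarity of $\cI_1(B)$, which is far weaker than $N^{-1/4}$. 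The paper itself treats the equivalence of discrete and continuous suprema for a general process in $\cX$ as a conjecture (see the remarks in Section~\ref{sec:sammels1}); it is proved there only for \emph{discrete} input processes. The correct move is to perform the discrete-to-continuous reduction on the random-walk side rather than the Brownian side: since $X_t=X_{\lfloor t\rfloor}$ is piecewise constant, $\cI_1(X)$ is piecewise linear with breakpoints at the integers, so
$$
\sup_{t\in[0,N+1]}\cI_1(X)_t=\max_{m\in\N\cap[0,N+1]}\cI_1(X)_m=\max(0,A_1,\dots,A_N),
$$
and the discrete and continuous survival probabilities for the walk coincide exactly. One then applies Theorem~\ref{theo:logestimates} with the continuous supremum on both sides and compares directly with $\pr{\sup_{t\in[0,T]}\cI_1(W)_t\le1}\approx T^{-1/4}$, which is the form in which the classical result is known. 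With this rerouting your argument closes.
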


Similarly, we obtain the result for integrated L\e vy processes.

\begin{corollary} Let $(X_t)_{t\geq 0}$ be a real-valued L\e vy process with $\E[ e^{\beta |X_1|}]<\infty$ for some $\beta>0$ and with $\E[X_1]=0$. Set $A_t:=\int_0^t X_s \dd s$. Then, as $T\to\infty$,
$$
(\log T)^{-4} T^{-1/4} \precsim \pr{ \sup_{t\in[0,T]} A_t \leq 1 } \precsim (\log T)^{4} T^{-1/4}.
$$
\end{corollary}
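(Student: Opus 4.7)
The corollary is a direct application of Theorem~\ref{thm:main} with a Brownian benchmark. First, I would verify the hypotheses: the given L\'evy process $X$ lies in $\cX$ since it is centered (hence a martingale), has independent stationary increments, and possesses the required exponential moment; a standard Brownian motion $B$ trivially belongs to $\cX$ as well. The integration operator $\cI=\cI_1$ arises from the constant kernel $K\equiv 1$, which satisfies \eqref{eqn:assI1} with $k=1$ and $\alpha=\beta=1$, and is regularly varying of index $0$, so the regularity hypothesis on $K$ is met.

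Second, I would recall the Brownian benchmark. For integrated Brownian motion, the results of McKean \cite{mckean1963}, Goldman \cite{goldman1971}, and Sinai \cite{sinai1991} (see also Lachal \cite{lachal1991} and Groeneboom et al.\ \cite{groeneboometal1999}) give
$$
\pr{\sup_{t\in[0,T]} \int_0^t B_s \dd s \leq 1} \approx T^{-1/4}, \qquad T\to\infty,
$$
i.e.\ the survival exponent exists and equals $1/4$, with matching upper and lower constants.

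Third, I would apply Theorem~\ref{thm:main} with $J=[0,\infty)$, $\cI=\cI_1$, the given $X$, and $Y=B$. This yields
$$
\pr{\sup_{t\in[0,T]} A_t \leq 1} \;\asymp_{\log}\; \pr{\sup_{t\in[0,T]} \int_0^t B_s \dd s \leq 1} \;\approx\; T^{-1/4},
$$
which already establishes the two-sided bound up to an unspecified polylogarithmic factor $(\log T)^{\pm \delta}$.

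The only non-trivial point is pinning down the specific exponent $\delta=4$ on the logarithm. This cannot be read off the qualitative statement of Theorem~\ref{thm:main} alone; it requires tracking through the quantitative coupling estimate announced in the paragraph following Theorem~\ref{thm:main} (``a more precise result that gives us control on the logarithmic loss between two processes''). The expected mechanism is the KMT-type strong approximation of $X$ by a Brownian motion, whose error term of order $\log T$ translates, after one integration and after comparing the event of staying below the barrier $1$ with the event of staying below $1\pm c\log T$, into a loss of a bounded power of $\log T$; this power, optimized over the auxiliary thresholds, comes out to $4$ and is the main obstacle one has to execute carefully.
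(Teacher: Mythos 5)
Your plan is correct and follows the paper's route exactly: membership of $X$ and a Brownian motion in $\cX$, the known benchmark $\pr{\sup_{t\in[0,T]}\int_0^t W_s\dd s\le 1}\approx T^{-1/4}$, and the comparison theorem. The only detail you leave implicit is that the logarithmic exponent $4$ is read off directly from Theorem~\ref{theo:logestimates} as $2(1+\alpha)$ with $\alpha=1$ for the kernel $K\equiv 1$ (the loss arises from the initial window of length $T_0\approx(\log T)^2$ in which the coupled process is forced below $-a_T$, contributing $T_0^{-(\alpha+1)}$), rather than from an optimization you would still have to carry out.
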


\bigskip
Theorem~\ref{thm:main} implies that the survival exponent $\theta$ is the same for any process from the class $\cX$. Of course, it is not clear that the exponent exists, but if it does for one process from the class $\cX$ so it does for any process from that class. Now we prove that the survival exponent does indeed exist for the particularly important case of the $\alpha$-fractional integration operator (\ref{eqn:fracintop}) and that it is decreasing in $\alpha$.

\begin{theo} There is a non-increasing function $\theta : [0,\infty) \to (0,1/2]$, $\theta : \alpha\mapsto \theta(\alpha)$, such that for any process $X$ from the class $\cX$ and any $\alpha\geq 0$
$$\label{eqn:defnthetaalpha}
\pr{ \sup_{t\in[0,T]} \cI_\alpha (X)_t \leq 1 } = T^{-\theta(\alpha)+o(1)},\qquad \text{as $T\to\infty$.}
$$
We recall that $\theta(0)=1/2$ and $\theta(1)=1/4$. \label{thm:expfibmdecreasing}
\end{theo}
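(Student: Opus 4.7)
The plan is to reduce to a Brownian motion representative via Theorem~\ref{thm:main}, then use the Lamperti transformation together with the FKG/Slepian inequality for Gaussian processes with non-negative covariances to establish existence, and use the operator identity $\cI_{\alpha+\beta}=\cI_\alpha\circ\cI_\beta$ together with the positivity of the Riemann--Liouville kernel to establish monotonicity. Both arguments will be finalised by invoking the drift/barrier-change robustness announced in the fourth bullet of Section~\ref{sec:result}.

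By Theorem~\ref{thm:main} we may replace $X\in\cX$ by a Brownian motion $B$, and set $A:=\cI_\alpha(B)$. The process $A$ is Gaussian, centered and self-similar of index $H_\alpha:=\alpha+1/2$; crucially,
\[
\cov(A_s,A_t)=\int_0^s\!\!\int_0^t K_\alpha(s-u)K_\alpha(t-v)(u\wedge v)\,\dd u\,\dd v\ge 0
\]
because the Riemann--Liouville kernel $K_\alpha(u)=u^{\alpha-1}/\Gamma(\alpha)$ and the Brownian covariance are both non-negative. To prove existence of $\theta(\alpha)$, apply the Lamperti change of variable $Y_u:=e^{-uH_\alpha}A_{e^u}$, $u\in\IR$. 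Self-similarity makes $(Y_u)_{u\in\IR}$ strictly stationary, and non-negativity of the covariance is preserved. A comparison with the independence case via Slepian's inequality (valid for centered Gaussian vectors with non-negative correlations), combined with stationarity, then yields
\[
\pr{\sup_{[0,U+V]}Y\le 1}\ge\pr{\sup_{[0,U]}Y\le 1}\cdot\pr{\sup_{[0,V]}Y\le 1},
\]
so $U\mapsto-\log\pr{\sup_{[0,U]}Y\le 1}$ is subadditive and Fekete's lemma yields a limit $\tilde\theta\in[0,\infty]$. Reverting to the original time variable through $t=e^u$ identifies $\{\sup_{[0,U]}Y\le 1\}$ with $\{A_t\le t^{H_\alpha}\text{ for all }t\in[1,e^U]\}$, so the survival probability of $A$ at the moving barrier $t^{H_\alpha}$ decays like $T^{-\tilde\theta+o(1)}$. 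One then appeals to the barrier-change/RKHS robustness of Section~\ref{sec:result} to replace the moving barrier $t^{H_\alpha}$ by the constant~$1$ without altering the polynomial exponent, and absorbs the interval $[0,1]$ with a further FKG step, concluding $p(T)=T^{-\tilde\theta+o(1)}$ and hence existence of $\theta(\alpha)=\tilde\theta$.

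For monotonicity fix $\alpha,\beta\ge 0$ and $X\in\cX$. Since $\cI_{\alpha+\beta}=\cI_\alpha\circ\cI_\beta$ and $K_\alpha\ge 0$, on the event $\{\sup_{[0,T]}\cI_\beta(X)_s\le 1\}$ one has
\[
\cI_{\alpha+\beta}(X)_t=\int_0^t K_\alpha(t-s)\,\cI_\beta(X)_s\,\dd s\le\int_0^t K_\alpha(u)\,\dd u=\frac{t^\alpha}{\Gamma(\alpha+1)}\qquad(t\in[0,T]),
\]
so $\pr{\sup_{[0,T]}\cI_\beta(X)_s\le 1}\le\pr{\cI_{\alpha+\beta}(X)_t\le t^\alpha/\Gamma(\alpha+1)\,\forall t\in[0,T]}$. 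The right-hand side is a moving-barrier problem for $\cI_{\alpha+\beta}(X)$, and a second application of the barrier-change robustness identifies its logarithmic rate with that of the constant-barrier problem, namely $\theta(\alpha+\beta)$. This yields $\theta(\alpha+\beta)\le\theta(\beta)$, the desired monotonicity. The range $\theta(\alpha)\in(0,1/2]$ then follows by combining this monotonicity with the known values $\theta(0)=1/2$ (Brownian reflection principle) and $\theta(1)=1/4$ (Sinai, together with Theorem~\ref{thm:main}); strict positivity follows from the trivial lower bound $p(T)\ge\pr{\sup_{[0,T]}|A_t|\le 1}$, whose polynomial decay exponent is finite.

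The main obstacle is the invocation of the drift/barrier-change robustness: it is used once to absorb the natural moving barrier $t^{H_\alpha}$ produced by the Lamperti transform and once more to absorb the barrier $t^\alpha$ in the monotonicity argument. These polynomial drifts sit at the boundary of the reproducing kernel Hilbert spaces of $\cI_\alpha(B)$ and $\cI_{\alpha+\beta}(B)$, respectively, so care is required in formulating the robustness result with sufficient generality --- e.g.\ via a suitable approximation --- to cover them. The remaining ingredients (Slepian's inequality for suprema of positively correlated Gaussian processes, stationarity of the Lamperti transform, and the semigroup identity $\cI_\alpha\circ\cI_\beta=\cI_{\alpha+\beta}$) are by comparison routine.
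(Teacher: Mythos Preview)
Your overall strategy mirrors the paper's --- reduce to Brownian motion via Theorem~\ref{thm:main}, Lamperti-transform and use Slepian/subadditivity for existence, and use the semigroup identity $\cI_{\alpha+\beta}=\cI_\alpha\circ\cI_\beta$ for monotonicity --- but the particular thresholds you choose create a genuine gap in the existence half that the RKHS drift technique cannot close.

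In the existence step you run subadditivity on $\{\sup_{[0,U]}Y\le 1\}$, which after undoing the Lamperti transform becomes $\{R^\alpha_t\le t^{H_\alpha}\text{ on }[1,T]\}$ with $H_\alpha=\alpha+\tfrac12$. To identify this exponent with that of $\{R^\alpha\le 1\text{ on }[0,T]\}$ you must absorb a drift of size $t^{H_\alpha}$; this means producing $f'\in L^2$ with $\cI_{\alpha+1}(f')(t)\sim t^{\alpha+1/2}$, which forces $f'(s)\sim s^{-1/2}$ and hence $\int_1^T f'(s)^2\dd s\sim\log T$. Feeding $\|f\|^2\sim\log T$ into Proposition~\ref{propo:drift} yields a correction $\exp(\pm c\log T)=T^{\pm c}$, which \emph{changes} the exponent. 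No smoothing or approximation avoids this, because the divergence of $\|f\|$ is at infinity, not at the origin. The paper's fix is to run subadditivity on $\{\sup Y\le 0\}$ instead; this unwinds to the \emph{constant} barrier $\{R^\alpha\le 0\text{ on }[1,T]\}$, and converting that to barrier $1$ needs only a drift $g$ with $g(t)\ge 1$ on $[1,\infty)$, obtainable from $f'=c\,\ind_{[0,1]}$ --- compactly supported, hence in the RKHS with $T$-independent norm (Corollary~\ref{cor:drifttrick}).

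Your monotonicity argument is closer to salvageable: from $\{\cI_\beta(X)\le 1\}$ you reach $\{\cI_{\alpha+\beta}(X)_t\le t^\alpha/\Gamma(\alpha+1)\}$, and the drift $t^\alpha$ can, after smoothing near $0$, be placed in the RKHS of $R^{\alpha+\beta}$ with finite $T$-independent norm (the required $f'(s)\sim s^{-\beta-1}$ lies in $L^2[1,\infty)$). You do not carry this out, however, and the paper bypasses it entirely: it starts from the barrier $\ind_{[0,1]}$ for $R^\gamma$ (which \emph{is} a compactly supported RKHS perturbation of barrier $1$, via $f'=c\,\ind_{[0,1]}$), restricts to $0<\alpha<1$, and uses concavity of $t\mapsto t^\alpha$ to obtain a \emph{constant} bound $1/\Gamma(\alpha+1)$ on $R^{\gamma+\alpha}$; self-similarity then rescales that constant to $1$ with no further drift argument on the higher-order process.

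A separate minor error: your justification of $\theta(\alpha)>0$ is the wrong way round --- the inequality $p(T)\ge\pr{\sup_{[0,T]}|A_t|\le 1}$ bounds $\theta$ from above, not below.
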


The proof of Theorem~\ref{thm:expfibmdecreasing} is given in Section~\ref{sec:exisproof}. Theorem~\ref{thm:expfibmdecreasing} does not yield new values for $\theta$, so it remains a challenge to calculate $\theta(\alpha)$, e.g.\ for integers $\alpha$. A lower bound for $\theta$ is obtained in Corollary~\ref{cor:slep}.

\bigskip The connection to random polynomials is discussed in Section~\ref{sec:randpoly}. We give some further remarks in Section~\ref{sec:sammels1}. In Section~\ref{sec:related}, we comment on some related work. The proof of the main result, Theorem~\ref{thm:main}, is given in Section~\ref{sec:proofmain1}. Sections~\ref{sec:apriori} and~\ref{sec:fkg} may be of independent interest: the former contains an a priori estimate for $\cI$ being the identity; the latter a version of the FKG inequality for processes with independent increments. In Section~\ref{sec:drift}, we show that adding a drift of a certain strength to the process $X$ does not influence the survival exponent. This newly developed drift argument also allows to change the barrier $F$. Using this drift argument, the proof of Theorem~\ref{thm:expfibmdecreasing} is an easy consequence.

\subsection{Random polynomials having few or no real zeros} \label{sec:randpoly}
Let us now give an application of our results to the study of zeros of random polynomials. The connection to the one-sided exit problem was established in \cite{demboetal,lishao2004aop}. It was shown in \cite{demboetal} that for $\xi_i$ i.i.d.\ Gaussian random variables
$$
\pr{ \sum_{i=0}^{2n} \xi_i x^i \leq 0~~\forall x\in\R } =n^{-b+o(1)}, \qquad n\to\infty,
$$
where
$$
b:=- 4 \lim_{T\to \infty} \frac{1}{T}\, \log \pr{ \sup_{t\in [0,T]} Y^ \infty_t \leq 0}
$$
and $Y^\infty$ is the stationary Gaussian process with correlation function
$$
{\corr}_\infty(\tau) := \E [Y^\infty_0 Y^\infty_\tau] = \frac{2 e^{-\tau/2}}{1+e^{-\tau}}.
$$
It was shown that $0.4<b<1.29$ (see \cite{demboetal,lishao2004aop}). Here we show the following connection to our problem and an improvement for the numerical value of $b$.

\begin{corollary} \label{cor:slep}
For the decreasing function $\theta$ defined in Theorem~\ref{thm:expfibmdecreasing} we have
$$\theta(\alpha)\geq b/4,\qquad \text{for all $\alpha\geq 0$}.$$
In particular, $b\leq 4 \cdot \theta(1)=1$.
\end{corollary}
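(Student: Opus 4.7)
The plan is to apply Slepian's inequality to compare the Lamperti transform of $\cI_\alpha(W)$---which, by the universality of Theorem~\ref{thm:main}, is enough to consider in place of the full class $\cX$---with the stationary Gaussian process $Y^\infty$ appearing in the definition of $b$, and then translate the resulting correlation domination into an upper bound on the exit probability.

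Since $\cI_\alpha(W)$ is self-similar of Hurst index $H=\alpha+1/2$, its Lamperti transform $Z^{(\alpha)}_s:=e^{-sH}\cI_\alpha(W)_{e^s}$ is centered stationary Gaussian. Using the Wiener-integral representation $\cI_\alpha(W)_t=\Gamma(\alpha+1)^{-1}\int_0^t(t-u)^\alpha\,dW_u$ (obtained by integration by parts in $u$), a direct computation gives the normalized correlation
\[
\corr_\alpha(\tau) = (2\alpha+1)\,e^{-\tau/2}\int_0^1\bigl[(1-w)(1-e^{-\tau}w)\bigr]^\alpha\,dw, \qquad \tau\ge 0.
\]
The key algebraic step is $(1-w)(1-rw)\le \bigl(1-\tfrac{1+r}{2}w\bigr)^2$ on $[0,1]^2$, equivalent to $(1-r)^2\ge 0$; raising to the $\alpha$-th power and integrating yields the pointwise inequality $\corr_\alpha(\tau)\le 2e^{-\tau/2}/(1+e^{-\tau})=\corr_\infty(\tau)$ for all $\alpha\ge 0$ and $\tau\ge 0$. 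Slepian's inequality then gives, after passing to unit-variance rescalings,
\[
\IP\bigl(Z^{(\alpha)}_s\le v,\,s\in[0,L]\bigr) \le \IP\bigl(Y^\infty_s\le v/c_\alpha,\,s\in[0,L]\bigr), \qquad c_\alpha:=\sqrt{\var Z^{(\alpha)}_0}.
\]

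For the translation, I fix $c\in(0,1)$ and restrict the Lamperti time to $[\log T-c\log T,\log T]$. Using monotonicity of $s\mapsto e^{-sH}$ and stationarity of $Z^{(\alpha)}$,
\[
\IP\bigl(\sup_{t\in[0,T]}\cI_\alpha(W)_t\le 1\bigr) = \IP\bigl(Z^{(\alpha)}_s\le e^{-sH},\,s\le\log T\bigr) \le \IP\bigl(Z^{(\alpha)}_s\le T^{-H(1-c)},\,s\in[0,c\log T]\bigr).
\]
Since the barrier $T^{-H(1-c)}/c_\alpha$ tends to $0$, for any $\epsilon>0$ and $T$ large enough the Slepian bound gives
\[
\IP\bigl(\sup_{t\in[0,T]}\cI_\alpha(W)_t\le 1\bigr) \le \IP\bigl(\sup_{s\in[0,c\log T]}Y^\infty_s\le\epsilon\bigr).
\]
By the definition of $b$ together with the drift argument of Section~\ref{sec:drift} (a constant shift of the barrier does not change the persistence exponent of $Y^\infty$, since its correlation $\mathrm{sech}(\tau/2)$ is integrable so that constants lie in the RKHS on bounded intervals), the right-hand side decays like $T^{-bc/4+o(1)}$. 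Letting $c\nearrow 1$ yields $\theta(\alpha)\ge b/4$; specializing $\alpha=1$ and using $\theta(1)=1/4$ then gives $b\le 1$.

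The main obstacle is this very last input: ensuring that the persistence exponent of $Y^\infty$ with a positive constant barrier $\epsilon>0$ still equals $b/4$ (the value for barrier $0$). This is exactly what the drift/RKHS argument of Section~\ref{sec:drift} is designed to deliver, and it is where the regularity of $\corr_\infty$ enters. The other ingredients---the integration-by-parts representation of $\cI_\alpha(W)$, the correlation computation, the algebraic inequality $\corr_\alpha\le\corr_\infty$, and Slepian's inequality---are routine once this framework is set up.
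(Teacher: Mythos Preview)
Your overall strategy---Lamperti transform, the correlation inequality $\corr_\alpha\le\corr_\infty$, and Slepian---is exactly the paper's. In fact your correlation estimate works directly for all real $\alpha\ge 0$, whereas the paper establishes it only for integers $\alpha=n$ and then appeals to the monotonicity of $\theta$. But there is a genuine gap in your final step.

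You claim that $\IP\bigl(\sup_{s\in[0,L]} Y^\infty_s\le\epsilon\bigr)=e^{-Lb/4+o(L)}$ for a \emph{fixed} $\epsilon>0$, citing the drift argument of Section~\ref{sec:drift}. This is not correct. For a centered stationary Gaussian process the persistence exponent genuinely depends on the barrier level (think of the Ornstein--Uhlenbeck process, where the exponent at level $\epsilon>0$ is the principal Dirichlet eigenvalue on $(-\infty,\epsilon)$ and is strictly smaller than at level $0$). Proposition~\ref{propo:drift} does not rescue you because the RKHS norm of the constant function $\epsilon$ on $[0,L]$ is not bounded as $L\to\infty$: for integrable correlations it grows like $\epsilon\sqrt{L}$, so $\|f\|^2\asymp L$ and the correction factor in Proposition~\ref{propo:drift} is $e^{O(L)}$, which can and does shift the exponent. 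Your justification (``integrable correlation, so constants lie in the RKHS on bounded intervals'') gives only membership, not a norm bound uniform in $L$.

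The paper avoids this by reversing the order of the two reductions. It first applies the drift/barrier argument to $R^\alpha$ itself (Corollary~\ref{cor:drifttrick}), where the relevant shift has RKHS norm independent of $T$, to pass from barrier $1$ on $[0,T]$ to barrier $0$ on $[1,T]$; \emph{then} the Lamperti transform lands on barrier exactly $0$ for $Y^n$, and Slepian compares directly with $\IP\bigl(\sup_{[0,\log T]}Y^\infty\le 0\bigr)$, whose exponent is $b/4$ by definition. No barrier change on the $Y^\infty$ side is ever needed. You could alternatively repair your route by exploiting that your actual barrier $T^{-H(1-c)}/c_\alpha$ tends to $0$ exponentially in $L=c\log T$, so that $\|f\|_{\cH_L}^2\cdot L\to 0$ and Proposition~\ref{propo:drift} gives only an $e^{o(1)}$ correction---but this requires first proving the $O(\epsilon^2 L)$ bound on the RKHS norm of constants for $Y^\infty$, which is extra work the paper's ordering sidesteps entirely.
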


This fact gives a further motivation to find values for $\theta(\alpha)$, $\alpha\notin \{0,1\}$.

\begin{proof} Note that it is sufficient to show the lemma for integer $\alpha$, since $\theta$ is decreasing. Consider the Lamperti transforms of the processes $R^n:=\cI_n(W)$, where $W$ is a Brownian motion, normalized by the square root of its variance:
$$
Y^n_t := n! \sqrt{ 2n+1} \,e^{-(n+1/2) t} R^n_{e^t}.
$$
This is a stationary Gaussian process. One can calculate its correlation function ($\tau\geq 0$):

\begin{align*}
{\corr}_n(\tau) &:= \E [Y^n_0 Y^n_\tau] =   n!^2 (2n+1)  e^{-(n+1/2) \tau} \E [ R^n_{1} R^n_{e^\tau} ]\\
 & =   (2n+1)  e^{-(n+1/2) \tau} \int_0^1 (e^\tau-u)^n (1-u)^n \dd u.
\end{align*}

It is elementary to see that
$$
(2n+1)  e^{-(n+1/2) \tau} \int_0^1 (e^\tau-u)^n (1-u)^n \dd u \leq \frac{2 e^{-\tau/2}}{1+e^{-\tau}},\qquad \tau\geq 0,n\geq 1,
$$
with equality at $\tau=0$. Indeed, note that
$$
e^{-n\tau}\int_0^1 (e^\tau-u)^n (1-u)^n \dd u = \int_0^1 [\sqrt{(1-e^{-\tau} u) (1-u)}]^{2n} \dd u \leq \int_0^1 \left( \frac{1-e^{-\tau} u + 1-u}{2}\right)^{2n} \dd u.
$$
Integrating the latter expression gives
$$
\frac{1}{2n+1} \,\frac{2}{e^{-\tau}+1}\,\left( 1 - \left(1-\frac{e^{-\tau}+1}{2}\right)\right)^{2n+1}\leq \frac{2}{e^{-\tau}+1}.
$$

This implies that, for all $n\geq 1$,
$${\corr}_n(0)={\corr}_\infty(0),\qquad\text{and}\qquad {\corr}_n(\tau) \leq {\corr}_\infty(\tau), \tau\geq 0.$$
Therefore, by Slepian's lemma,
\begin{align*}
b &=- 4 \lim_{T\to \infty} \frac{1}{T}\, \log \pr{ \sup_{t\in [0,T]} Y^ \infty_t \leq 0}\\
 & \leq - 4 \lim_{T\to \infty} \frac{1}{T}\, \log \pr{ \sup_{t\in [0,T]} Y^n_t \leq 0}\\
 &= - 4 \lim_{T\to \infty} \frac{1}{T}\, \log \pr{ \sup_{t\in [0,T]} R^n_{e^t} \leq 0}\\
 &= - 4 \lim_{T\to \infty} \frac{1}{T}\, \log \pr{ \sup_{t\in [1,e^T]} R^n_{t} \leq 0}\\ 
 &= - 4 \lim_{T\to \infty} \frac{1}{\log T}\, \log \pr{ \sup_{t\in [1,T]} R^n_{t} \leq 0}\\
 &= 4 \cdot \theta(n),
\end{align*}
where the last step follows from Corollary~\ref{cor:drift3}.
\end{proof}

\subsection{Further remarks} \label{sec:sammels1}
Let us consider fractional Brownian motion (FBM) with Hurst parameter $H\in(0,1)$. It is a close relative of the $\alpha$-fractionally integrated Brownian motion (also called Riemann-Liouville process) with $\alpha:=H-1/2>0$ defined by:
\begin{equation}
R_t^\alpha := \cI_\alpha(W)_t = \frac{1}{\Gamma(\alpha)} \, \int_0^t (t-s)^{\alpha-1} W_s \dd s ,\qquad t\geq 0,
\label{eqn:fbmvsfibm}
\end{equation}
where $W$ is a Brownian motion. For completeness we set $R^0:=W$. Let furthermore
$$
X^\alpha:=R^\alpha +  M^\alpha, \quad\text{where}\quad M_t^\alpha:=\frac{1}{\Gamma(H-1/2)} \int_{-\infty}^0 \left( (t-s)^{H-1/2}-(-s)^{H-1/2}\right)\dd W_s.
$$
Then $X^\alpha$ is a fractional Brownian motion with Hurst parameter $H=\alpha+1/2$.

For $\alpha$-fractionally integrated Brownian motion the survial exponent is given in Theorem~\ref{thm:expfibmdecreasing}. Further, we recall that the survival exponent for FBM with Hurst parameter $H$ is known to be $\theta_{\rm FBM}=1-H$, see \cite{molchan1999}. In view of Theorem~\ref{thm:expfibmdecreasing} (the function $\theta$ is decreasing and $\theta(1)=1/4$ for the $\alpha$-fractionally integrated Brownian motion), it is clear that the survival exponents of both processes cannot coincide. This fact may come as a surprise since often properties of $X^\alpha$ are the same as those of $R^\alpha$.

\begin{corollary} For $\alpha\in(1/4,1/2)$, the survival exponent of $\alpha$-fractionally integrated Brownian motion $R^\alpha=\cI_\alpha(W)$ is not equal to the survival exponent of FBM with the corresponding Hurst parameter $H:=\alpha+1/2$.
\end{corollary}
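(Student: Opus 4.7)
The plan is to combine two ingredients that are already available by the time this corollary appears: the known value of the survival exponent for FBM (Molchan's result $\theta_{\rm FBM}=1-H$), and the qualitative properties of the function $\alpha\mapsto\theta(\alpha)$ from Theorem~\ref{thm:expfibmdecreasing}, namely that $\theta$ is non-increasing on $[0,\infty)$ with $\theta(1)=1/4$.

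Concretely, for $\alpha\in(1/4,1/2)$ I would set $H:=\alpha+1/2\in(3/4,1)$ and compute
$$
\theta_{\rm FBM} \;=\; 1-H \;=\; \tfrac12-\alpha \;\in\;\left(0,\tfrac14\right).
$$
On the other hand, since $\alpha<1$ and $\theta$ is non-increasing,
$$
\theta(\alpha)\;\ge\;\theta(1)\;=\;\tfrac14.
$$
Thus $\theta_{\rm FBM}<1/4\le\theta(\alpha)$, so the two exponents cannot coincide, proving the claim.

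There is essentially no obstacle: the entire content of the statement is a separation of two ranges of exponents, which becomes a one-line arithmetic check once monotonicity of $\theta$ and the boundary value $\theta(1)=1/4$ are in hand. The only point worth underlining in the write-up is that the strict inequality $\alpha<1/2$ is what forces $\theta_{\rm FBM}$ into the open interval $(0,1/4)$, so the gap to $\theta(\alpha)\ge 1/4$ is genuinely strict. (If one cared about the endpoint $\alpha=1/4$, the FBM exponent would equal $1/4$ and one would need a strict monotonicity statement for $\theta$, which is not asserted in Theorem~\ref{thm:expfibmdecreasing}; this is presumably why the corollary restricts to $\alpha>1/4$.)
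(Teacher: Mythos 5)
Your proposal is correct and is essentially the paper's own argument: the paper justifies this corollary by exactly the same observation that $\theta$ is non-increasing with $\theta(1)=1/4$, hence $\theta(\alpha)\ge 1/4$ for $\alpha\le 1$, while Molchan's value $\theta_{\rm FBM}=1-H=\tfrac12-\alpha$ lies strictly below $1/4$ for $\alpha>1/4$. Your closing remark about the endpoint $\alpha=1/4$ correctly identifies why the corollary is stated on the open interval.
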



\bigskip
As a last remark, note that our main theorem considers the behavior of
$$
\pr{ \sup_{t\in J\cap [0,T]} \cI(X)_t \leq 1}
$$
where in the supremum either $J=[0,\infty)$ or $J=\N$. The question arises,  whether is it true that, for any process $X$ from the class $\cX$,
$$
\pr{ \sup_{t\in[0,T]} \cI(X)_t \leq 1} \approx \pr{ \sup_{t\in\N\cap [0,T]} \cI(X)_t \leq 1}.
$$

One can answer this question affirmatively for the case that $\cI$ is the usual integration and $X$ is a discrete process, since $(\cI(X)_t)_{t\geq 0}$ is the linear interpolation of $(\cI(X)_t)_{t\in \N}$, which gives:
$$
\pr{ \sup_{t\in\N\cap [0,\lceil T\rceil ]} \cI(X)_t \leq 1}\leq \pr{ \sup_{t\in[0,T]} \cI(X)_t \leq 1} \leq \pr{ \sup_{t\in\N\cap [0,T]} \cI(X)_t \leq 1},\qquad T>0.
$$
We conjecture that it also holds under suitable conditions on $\cI$.

\subsection{Related work} \label{sec:related}
Let us comment on some further related work and the relevance of the questions (\ref{eqn:exit}), (\ref{eqn:lowertail}), and (\ref{eqn:exitdiscrete}) for other problems.

Li and Shao \cite{lishao2004aop,lishao2005cosmos} are the first who aim at building a theory for a whole class of processes. In the mentioned works, the lower tail probability problem (\ref{eqn:lowertail}) is studied for Gaussian processes. It is shown that the decrease in (\ref{eqn:lowertail}) is indeed on the polynomial scale for many one-dimensional Gaussian processes. However, the technique does not yield values for the survival exponent. An important tool in the study of the above problems for Gaussian processes is the Slepian lemma \cite{slepian1962} and a comparable opposite inequality from \cite{lishoacomparison2002}.

The survival exponent is unknown for the integrated fractional Brownian motion, see \cite{molchankhokhlov2004}. A related question for the Brownian sheet is solved in \cite{csaskiatal2000,csaskiatal1999}. Further references with partial results are \cite{sinai1997,marcus1999,basseisenbaumshi2000}.

We further mention a recent work of Simon \cite{simon2007}, where the problem is studied for certain integrated stable L\e vy processes (in particular, with heavy tails). Even though we also study integrated L\e vy processes in this paper, the results and techniques are completely disjoint.

We finally mention that the survival exponent has a deeper meaning in several models, in particular, in statistical physics when studying the fractal nature of the solution of Burgers' equation, see \cite{sinai1991,frisch,bertoin-burg,molchan1999,molchan1999b,molchan2000,simonburgers}. Apart from this, the exponent plays a role in connection with pursuit problems (see \cite{lishao2004aop} and references therein), in the study of most visited sites of a process (see e.g.\ \cite{basseisenbaumshi2000}), and in the investigation of zeros of random polynomials (see \cite{demboetal} and references therein and Section~\ref{sec:randpoly} above). We refer to \cite{lishao2004aop} for a recent overview of the applications. The question can also be encountered in the physics literature, see \cite{maju} for a summary. The discrete version (\ref{eqn:exitdiscrete}) is studied in connection with random polymers, see \cite{caravennadeuschel}.

\section{Proof of the universality result}
\subsection{A priori estimate via Skorokhod embedding} \label{sec:apriori}
In the proof of Theorem~\ref{thm:main}, we need an a priori estimate for  $X$ from the class $\cX$ of the form
\begin{align}\label{eq1111-3}
\IP(\sup_{t\in[0,T]} X_t\leq \frac{1}{T^{\alpha}})\ge c\, T^{-\delta},\qquad T\ge T_0,
\end{align}
for some $\delta>0$ and $T_0>0$. Here we provide a way of obtaining such an estimate. We do not require finite exponential moments in this context.

\begin{propo} \label{prop:skor} Let $X$ be  either a L\'{e}vy martingale or a random walk with centered increments with $\var(X_1)=\sig^2>0$. Let $(b_t)_{t\geq 0}$ be such that $b_t^2/t\to0$ and $b_t\succsim  t^{-\delta}$, as $t\to\infty$, for some $\delta\ge 0$. Suppose that $\E |X_1|^{2p}<\infty$ for some $p>2\delta+1$. Then we have
$$
\IP(\sup_{s\in [0,t]} X_s\leq b_t) \gtrsim  \sqrt{\frac {2 b_t^2}{\pi \sig^2 t}} \qquad \text{as $t\to\infty$.}
$$
\end{propo}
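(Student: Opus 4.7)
The plan is to reduce to standard Brownian motion via Skorokhod embedding and then apply the reflection principle. For the random walk case, Skorokhod's classical construction supplies a standard Brownian motion $(B_s)_{s\ge 0}$ and i.i.d.\ non-negative stopping times $\tau_1,\tau_2,\ldots$ such that $X_n=B_{T_n}$ with $T_n:=\sum_{i=1}^n\tau_i$, $\IE[\tau_1]=\sig^2$, and $\IE[\tau_1^p]\le C_p\,\IE[|X_1|^{2p}]<\infty$; I then set $\rho_t:=T_{\lfl t\rfl}$. For the L\'evy case I would appeal to Monroe's embedding theorem, producing a non-decreasing stopping-time family $(\rho_t)_{t\ge 0}$ with $X_t=B_{\rho_t}$, $\IE[\rho_t]=\sig^2 t$, and $\IE|\rho_t-\sig^2 t|^p\precsim t^{p/2}$, the last bound coming from Burkholder--Davis--Gundy applied to the martingale $B_{\rho_t}^2-\rho_t$ together with the assumption $\IE[|X_1|^{2p}]<\infty$.

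In both cases one has the crucial pointwise inclusion $\sup_{s\in[0,t]}X_s\le \sup_{u\in[0,\rho_t]}B_u$, so for any deterministic $M>0$,
$$
\IP\!\left(\sup_{s\in[0,t]}X_s\le b_t\right)\;\ge\;\IP\!\left(\sup_{u\in[0,M]}B_u\le b_t\right)\;-\;\IP(\rho_t>M).
$$
Taking $M=(1+\eps)\sig^2 t$, the reflection principle together with the Gaussian lower tail gives
$$
\IP\!\left(\sup_{u\in[0,M]}B_u\le b_t\right)=\IP(|B_M|\le b_t)\sim \sqrt{\tfrac{2}{\pi}}\,\frac{b_t}{\sqrt{M}}=(1+\eps)^{-1/2}\sqrt{\tfrac{2\,b_t^2}{\pi\sig^2 t}},
$$
where the asymptotic is valid because $b_t^2/t\to 0$.

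Finally, I absorb the tail term: the moment bound $\IE|\rho_t-\sig^2 t|^p\precsim t^{p/2}$ combined with Markov's inequality at order $p$ yields $\IP(\rho_t>(1+\eps)\sig^2 t)\precsim t^{-p/2}$. Since $b_t\succsim t^{-\delta}$, the target leading order satisfies $b_t/\sqrt{t}\succsim t^{-\delta-1/2}$, and the hypothesis $p>2\delta+1$ is precisely the condition that renders the Markov tail $o(b_t/\sqrt t)$. Sending $\eps\dto 0$ concludes the proof.

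The main obstacle I expect is the L\'evy-case moment estimate $\IE|\rho_t-\sig^2 t|^p\precsim t^{p/2}$, which requires Monroe's theorem plus a BDG-type martingale inequality; the random-walk analogue $\IE|T_n-n\sig^2|^p\precsim n^{p/2}$ follows routinely from Marcinkiewicz--Zygmund applied to the centered i.i.d.\ sum $\sum_{i=1}^n(\tau_i-\sig^2)$. The sharpness of the constraint $p>2\delta+1$ comes exactly from balancing the Markov tail $t^{-p/2}$ against the leading order $t^{-\delta-1/2}$, so no slack is available.
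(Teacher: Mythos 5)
Your proposal is correct and follows essentially the same route as the paper: Skorokhod/Monroe embedding into Brownian motion, a $p$-th moment bound $\IE|\rho_t-\sig^2 t|^p\precsim t^{p/2}$ for the time change, the split $\IP(\sup_{s\le t}X_s\le b_t)\ge\IP(\sup_{u\le(1+\eps)\sig^2 t}B_u\le b_t)-\IP(\rho_t>(1+\eps)\sig^2 t)$, the reflection principle for the main term, Markov's inequality for the tail, and the balance $p>2\delta+1$ before letting $\eps\dto 0$. The only cosmetic difference is that the paper uses Monroe's embedding uniformly (exploiting that the stopping-time family inherits stationary independent increments, so that $\tau(t)-\sig^2 t$ is a martingale amenable to BDG), whereas you treat the random-walk case by the classical i.i.d.\ Skorokhod construction plus Marcinkiewicz--Zygmund; both yield the same moment estimate.
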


\begin{remark}
Note that the estimate is sharp in the sense that one gets $\sim$ instead of $\gtrsim$ if $X$ is a Brownian motion.
\end{remark}

\begin{proof} Fix $p$ such that $p> 2\delta+1$ and $\E |X_1|^{2p}<\infty$, where $\delta$ is as in the statement of the proposition.

\emph{Embedding.} We apply a Monroe~\cite{Mon72} embedding. On an appropriate filtered probability space (possibly one needs to enlarge the underlying probability space), one can define a (right-continuous) family of finite \emph{minimal} stopping times $(\tau(t))_{t\geq 0}$ and a Brownian motion $(W_t)$ such that almost surely
$$
X_t= W_{\tau(t)}
$$
for all times $t\geq 0$. By minimality of $\tau(1)$, we conclude that $(W_{t\wedge \tau(1)})$ is uniformly integrable. Hence,  $\IE[\tau(1)]=\IE[W_{\tau(1)}^2]=\IE[X_1^2]=\sig^2<\infty$. Moreover, by the Burkholder-Davis-Gundy inequality (BDG inequality), one has 
$$
\IE[\tau(1)^p]= \IE \bigl [ [W]_{\tau_1}^{p}\bigr]  \leq c_1 \IE[ \sup_{s\in[0,\tau(1)]} |W_s|^{2p} ],
$$
where $c_1=c_1(p)$ is a constant that depends only on $p$. Since $(W_{t\wedge \tau(1)})$ is a uniformly integrable martingale, we get with Doob's inequality that
\begin{align}\label{eq0827-1}
\IE[\tau(1)^p]  \leq c_2 \,\IE[  |W_{\tau(1)}|^{2p} ]=c_2 \,\IE[  |X_1|^{2p} ]<\infty,
\end{align}
where $c_2=c_2(p)$ is an appropriate constant.

Since $(X_t)$ has stationary and independent increments, the embedding can be established such that $(\tau(t))_{t\geq 0}$ itself has stationary and independent increments, see \cite{Mon72}. Hence,
$(\tau(t)-\sig^2 t)$ is a martingale and we conclude with the BDG inequality that
$$
\IE[(\tau(t)-\sig^2 t)^{p}] \leq \IE [\sup_{s\in[0,\lceil t\rceil] } (\tau(s)-\sig^2 s)^{p}] \leq c_3\, \IE\bigl[  [\tau(\cdot)-\sig^2 \cdot]_{\lceil t\rceil }^{p/2}\bigr],
$$
where $c_3=c_3(p)$ is an appropriate constant. Here, $[\cdot]$ denotes the classical bracket process. Next, we apply the triangle inequality together with the stationarity of $(\tau(t)-\sig^2 t)$ to conclude that
$$
\IE[(\tau(t)-\sig^2 t)^{p} ] \leq c_3\,\lceil t\rceil^{p/2} \,\IE\bigl[  [\tau(\cdot)-\sig^2 \cdot]_{1}^{p/2	}\bigr].
$$
It remains to verify the finiteness of the latter expectation. 
First observe that by the BDG inequality
$$
\IE\bigl[  [\tau(\cdot)-\sig^2 \cdot]_{1}^{p/2}\bigr]\leq c_4\, \IE\bigl[ \sup_{s\in[0,1]}  |\tau(s)-\sig^2 s |^{p}\bigr]\leq 2^{p} c_4\bigl( \IE[ \tau(1)^{p}] +\sig^{2p}\bigr),
$$
where $c_4=c_4(p)$ is an appropriate constant. By~(\ref{eq0827-1}),  $\IE[\tau(1)^{p}]$ is finite, and there exists a constant $c_5$ depending on $p$ and the $2p$-th moment of $X_1$ such that for all $t>0$
\begin{align}\label{eq0818-1}
\IE[(\tau(t)-\sig^2 t)^{p}] \leq c_5\, \lceil t\rceil ^{p/2}.
\end{align}

\emph{Estimate of the probability.} Fix $\eps>0$ and observe that
\begin{align}\label{eq0818-2}
\IP(\sup_{s\in[0,t]} X_s\leq b_t) \ge \IP(\sup_{s\in[0, (1+\eps)t\sig^2]} W_{s}\leq b_t) - \IP(\tau(t)\ge (1+\eps) \sig^2 t).
\end{align}
Note that the first term on the right hand side of the latter equation can be computed explicitly:
\begin{align*}
 \IP(\sup_{s\in[0, (1+\eps)t\sig^2]} W_{s}\leq b_t)= \sqrt{\frac 2\pi} \int_0^{\frac{b_t}{\sqrt{(1+\eps)t \sig^2}}} e^{-\frac{y^2}{2}}\dd y\sim \sqrt{\frac 2\pi} \frac{b_t}{\sqrt{(1+\eps)t \sig^2}}.
\end{align*}
In the last step, we used that $b_t^2/t\to0$. Conversely, the second term in (\ref{eq0818-2}) can be controlled via the Chebyshev inequality and~(\ref{eq0818-1}):
$$
\IP(\tau(t)- \sig^2 t\ge \eps \sig^2 t)\leq \frac{\IE[ |\tau(t)-\sig^2 t|^p]}{ (\eps \sig^2 t)^p}\leq c_5\,\frac{\lceil t\rceil^{p/2}}{(\eps\sig^2 t)^p}\approx t^{-p/2}.
$$
By the choice of $p$, the second term on the right hand side of~(\ref{eq0818-2}) is of lower order than the first term. We obtain the lower bound in the proposition by letting $\eps$ tend to zero.
\end{proof}


\begin{remark}[Polynomial behavior of the survival probability]\label{noexpdecay}
Let $X$ be  either a L\'{e}vy martingale or a random walk with centered increments. Let $\alpha$ be as in (\ref{eqn:assI1}) and note that there exists a constant $c\in(0,\infty)$ such that $\int_0^t K(s)\dd s \le c\, t^\alpha$ for all $t\ge 1$. We conclude that for $T\ge 1$
$$
\sup_{t\in [0,T]} \cI(X)_t \le c \,T^\alpha\sup_{t\in [0,T]} X_t ;
$$
so that, by Proposition~\ref{prop:skor},
$$
\IP(\sup_{t\in [0,T]} \cI(X)_t\le 1) \ge  \IP(\sup_{t\in [0,T]} X_t\le (c \,T^\alpha)^{-1}) \succsim T^{-(\alpha+\frac12)},
$$
if  $\E |X_1|^{2p}$ is finite for some $p>2\alpha+1$. In particular, the survival probability cannot decay faster than polynomially in our general setting.
\end{remark}

The estimate from the previous remark is far from optimal in general. We shall use it as an a priori estimate.

Finally, we also recall the following result for Brownian motion with drift. It can be obtained from the distribution of the first hitting time of Brownian motion with a line, which is explicitly known, see e.g.\ \cite{steele}, p.\ 217.
\begin{lemma}\label{lem:apbm} Let $\sigma>0$ and $W$ be a Brownian motion. Then
$$
\pr{ \sigma W_t \leq 1-\frac{t}{\sqrt{T}}, \forall t\leq T} \succsim T^{-1/2}.
$$
\end{lemma}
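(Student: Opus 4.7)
The plan is to reduce the statement by Brownian scaling to a closed-form calculation for the running maximum of Brownian motion with unit drift on $[0,1]$.

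First I would apply the scaling $W_{Tu}\stackrel{d}{=}\sqrt{T}\,\tilde W_u$, with $\tilde W$ a standard Brownian motion, to rewrite the event $\{\sigma W_t\leq 1-t/\sqrt{T}\ \forall t\in[0,T]\}$ in distribution as
$$\Big\{\max_{u\in[0,1]}\bigl(\sigma\tilde W_u+u\bigr)\leq T^{-1/2}\Big\}.$$
Thus the task reduces to showing that the probability that a Brownian motion with variance $\sigma^2$ and unit drift stays below the small level $a:=T^{-1/2}$ throughout $[0,1]$ is $\succsim a$.

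Next I would invoke the classical closed form for the distribution of the running maximum of Brownian motion with drift (equivalent to the hitting-time density for an affine boundary, which is the result quoted from Steele, p.~217): writing $B_u:=\sigma\tilde W_u+u$,
$$\pr{\max_{u\in[0,1]}B_u\leq a}=\Phi\!\Bigl(\tfrac{a-1}{\sigma}\Bigr)-e^{2a/\sigma^{2}}\,\Phi\!\Bigl(\tfrac{-a-1}{\sigma}\Bigr),\qquad a\geq 0.$$
This expression vanishes at $a=0$, so a first-order Taylor expansion in $a$ suffices: a short computation yields
$$\pr{\max_{u\in[0,1]}B_u\leq a}=\frac{2a}{\sigma}\Bigl(\phi(1/\sigma)-\tfrac{1}{\sigma}\Phi(-1/\sigma)\Bigr)+O(a^{2}),\qquad a\downarrow 0.$$

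The one step that genuinely needs checking is positivity of the leading coefficient. Setting $x=1/\sigma>0$, this amounts to the inequality $\phi(x)/x>1-\Phi(x)$, which is the standard Mills-ratio upper bound and holds for all $x>0$. Substituting $a=T^{-1/2}$ then yields the required $\succsim T^{-1/2}$ lower bound, completing the argument. Apart from the Mills check, everything is a direct appeal to the explicit boundary-crossing formula, so I do not anticipate any real obstacle; no a priori estimate, coupling, or FKG input is needed here.
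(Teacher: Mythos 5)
Your argument is correct: the scaling step correctly turns the event into $\{\max_{u\in[0,1]}(\sigma\tilde W_u+u)\leq T^{-1/2}\}$, the quoted boundary-crossing formula is the standard one, and the first-order expansion with the Mills-ratio bound $1-\Phi(x)<\phi(x)/x$ does give a strictly positive coefficient, hence the $\succsim T^{-1/2}$ bound. This is exactly the route the paper intends -- it gives no written proof but points to the explicit distribution of the hitting time of a line (Steele, p.~217), which is precisely the formula you invoke.
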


\subsection{The FKG inequality} \label{sec:fkg}
We will use a version of the FKG inequality for processes with independent increments. Even though the proof follows the standard method (see e.g.\ \cite{grimmett}), we were unable to find this result in the literature.

We call a function $f : \R^n \to \R$ increasing (decreasing, respectively) if for any vectors $x=(x_1,\ldots,x_n)$ and $y=(y_1,\ldots,y_n)$ with $x_1\geq y_1$ and $(x_i-y_i)_{i=1}^n$ increasing we have $f(x)\geq f(y)$ ($f(x)\leq f(y)$, respectively). Note that if $f$ is increasing (decreasing) in each component then it is increasing (decreasing) in this sense.


\begin{theo} \label{thm:fkg}
Let $(X_t)_{t\geq 0}$ be a stochastic process with independent increments. Fix $n\in \N$ and let $f, g : \R^n \to \R$ be measurable functions that are either increasing or decreasing (in the sense defined above).  Then, for any choice of $0\leq t_1\leq \ldots\leq t_n$ such that $\E [ |f(X_{t_1}, \ldots, X_{t_n})\wedge 0| ]< \infty$ and $\E [ |g(X_{t_1}, \ldots, X_{t_n})\wedge 0|] < \infty$, we have
$$
\E [ f(X_{t_1}, \ldots, X_{t_n} ) g(X_{t_1}, \ldots, X_{t_n}) ] \geq \E [f(X_{t_1}, \ldots, X_{t_n})] \,\E [g(X_{t_1}, \ldots, X_{t_n})].
$$
\end{theo}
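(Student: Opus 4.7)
The plan is to reduce Theorem~\ref{thm:fkg} to the classical Harris inequality for independent random variables by a linear change of variables. Set $\Delta_j := X_{t_j}-X_{t_{j-1}}$ for $j=1,\dots,n$ (with $t_0=0$ and $X_0=0$); by the independent increments hypothesis, $\Delta_1,\dots,\Delta_n$ are independent. Since $X_{t_k}=\Delta_1+\cdots+\Delta_k$, define $\tilde f,\tilde g:\R^n\to\R$ via
\[
\tilde f(\xi_1,\dots,\xi_n):=f\Bigl(\xi_1,\xi_1+\xi_2,\dots,\sum_{j=1}^n\xi_j\Bigr),
\]
and analogously for $\tilde g$, so that $f(X_{t_1},\dots,X_{t_n})=\tilde f(\Delta_1,\dots,\Delta_n)$, and likewise for $g$.

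The first step is to observe that $\tilde f$ is coordinatewise increasing (respectively decreasing) on $\R^n$ whenever $f$ is increasing (resp.\ decreasing) in the generalised sense of the statement. Indeed, raising a single coordinate $\xi_m$ to $\xi_m+\delta$ with $\delta>0$ transforms the vector $x=(x_1,\dots,x_n)$ with $x_k=\sum_{j\le k}\xi_j$ into $x'$ with $x'_k=x_k$ for $k<m$ and $x'_k=x_k+\delta$ for $k\ge m$. Then $x'_1\ge x_1$, and the differences $(x'_i-x_i)_{i=1}^n=(0,\dots,0,\delta,\dots,\delta)$ form a non-decreasing sequence, so by hypothesis $f(x')\ge f(x)$, i.e.\ $\tilde f$ is increasing in $\xi_m$. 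The decreasing case is identical after replacing $f,g$ by $-f,-g$, which turns the problem into the increasing case with the same conclusion for the product $fg=(-f)(-g)$.

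It therefore suffices to prove the classical Harris inequality: if $\Delta_1,\dots,\Delta_n$ are independent and $\tilde f,\tilde g$ are both coordinatewise increasing on $\R^n$ with the stated integrability, then $\E[\tilde f\tilde g]\ge \E[\tilde f]\E[\tilde g]$. I would prove this by induction on $n$. For $n=1$ the pointwise inequality $(\tilde f(x)-\tilde f(y))(\tilde g(x)-\tilde g(y))\ge 0$ holds for every $x,y\in\R$, and integrating against the product of two independent copies of $\Delta_1$ and expanding yields $2\E[\tilde f\tilde g]-2\E[\tilde f]\E[\tilde g]\ge 0$. For the inductive step, condition on $\Delta_n$: the slices $\xi\mapsto\tilde f(\xi,\Delta_n)$ and $\xi\mapsto\tilde g(\xi,\Delta_n)$ on $\R^{n-1}$ are still coordinatewise increasing, so the inductive hypothesis applied under the conditional law of $(\Delta_1,\dots,\Delta_{n-1})$ (independent of $\Delta_n$) gives
\[
\E[\tilde f\tilde g\mid\Delta_n]\ge \E[\tilde f\mid\Delta_n]\,\E[\tilde g\mid\Delta_n].
\]
The two right-hand factors are themselves increasing functions of $\Delta_n$ alone, so the $n=1$ case applied to them, followed by taking expectations, closes the induction.

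The only point that requires care is the integrability bookkeeping: the hypothesis that the negative parts of $f$ and $g$ (composed with the process) are integrable allows $\E[\tilde f]$ and $\E[\tilde g]$ to lie in $(-\infty,\infty]$, while $\E[\tilde f\tilde g]$ is well-defined in $(-\infty,\infty]$ by monotone truncation from below (replace $\tilde f$ and $\tilde g$ by $\tilde f\vee(-M)$ and $\tilde g\vee(-M)$, apply the inequality, and let $M\to\infty$ using monotone convergence on the positive parts and dominated convergence on the negative parts). This is the only step beyond the standard proof, and I expect it to pose no genuine obstacle.
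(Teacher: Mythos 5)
Your proposal is correct and follows essentially the same route as the paper: pass to the increment vector $(X_{t_1},X_{t_2}-X_{t_1},\dots,X_{t_n}-X_{t_{n-1}})$, check that the composed functions become coordinatewise monotone, and invoke the classical Harris/FKG inequality for product measures (which you prove by the standard induction the paper cites from Grimmett). The only difference is that you spell out the induction and the truncation bookkeeping explicitly, which the paper leaves to the reference.
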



\begin{proof}
Set
$$
\tilde{f}(x_1,\ldots,x_n):=f(x_1,x_1+x_2, \ldots, x_1+\ldots+x_n).
$$
Since $f$ is increasing in the sense defined above, $\tilde{f}$ is increasing in each component. Analogously, we define $\tilde{g}$. Note that
$$
f(X_{t_1},\ldots,X_{t_n})= \tilde{f}(X_{t_1},X_{t_2}-X_{t_1},\ldots, X_{t_{n}}-X_{t_{n-1}}).
$$
Due to this observation and the fact that $(X_{t_1},X_{t_2}-X_{t_1},\ldots, X_{t_{n}}-X_{t_{n-1}})$ is a vector with independent components, the usual FKG inequality for the product measure (as it can be proved using the technique in e.g.\ \cite{grimmett}) gives us the assertion.
\end{proof}

\subsection{Proof of Theorem~\ref{thm:main}} \label{sec:proofmain1}
Here we give the proof of Theorem~\ref{thm:main}. In fact, we shall prove the following more precise result that gives us control on the logarithmic loss. Theorem~\ref{thm:main} immediately follows from it.

\begin{theo} \label{theo:logestimates}
Let $X$ be a process from the class $\cX$, $W$ be a Brownian motion, and $\cI$ a functional as specified above. Then we have
\begin{equation} \label{eqn:logterms}
(\log T)^{-2(1+\alpha)} \precsim  \frac{\pr{ \sup_{t\in [0,T]} \cI(X)_t \leq 1 }}{\pr{ \sup_{t\in [0,T]} \cI(W)_t \leq 1 }} \precsim (\log T)^{2(1+\alpha)},\qquad\text{as $T\to\infty$.}
\end{equation}
\end{theo}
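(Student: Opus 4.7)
The proof is a Hungarian-style strong-coupling argument combined with the drift machinery of Section~\ref{sec:drift}.

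\emph{Step 1 (Strong approximation).} I would invoke a Koml\'os--Major--Tusn\'ady type strong approximation (available for random walks with exponential moments, and for L\'evy processes with exponential moments via Sakhanenko/Zaitsev) to realize $X$ and a scaled Brownian motion $\sigma W$ on a common probability space so that, for any prescribed $q>0$, there exists $a=a(q)$ with
$$
\IP\Bigl(\sup_{s\in [0,T]} |X_s - \sigma W_s| > a \log T\Bigr) \le T^{-q}, \qquad T\ge T_0.
$$
This is where the exponential moment assumption defining $\cX$ is used.

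\emph{Step 2 (Transfer through $\cI$).} The kernel estimate (\ref{eqn:assI1}) yields $\int_0^t K(s)\,ds \le c\, t^\alpha$ for $t\ge 1$, so on the good coupling event
$$
\sup_{t\in[0,T]} |\cI(X)_t - \cI(\sigma W)_t| \le ac\, T^\alpha \log T.
$$
Combined with Step~1 this sandwiches $\{\sup_{t\le T}\cI(X)_t \le 1\}$ between the corresponding survival events for $\cI(\sigma W)$ with barriers $1 \pm ac\,T^\alpha\log T$, up to an exceptional event of probability $T^{-q}$.

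\emph{Step 3 (Drift and FKG).} The crux is to show that perturbing the barrier by $O(T^\alpha\log T)$ changes the survival probability of $\cI(\sigma W)$ only by a polylog factor. This is precisely what the drift argument of Section~\ref{sec:drift} provides: using self-similarity of $\cI(\sigma W)$ (scale $T^{\alpha+1/2}$) the perturbation corresponds to a relative barrier change of order $T^{-1/2}\log T$, which is absorbed into a polylog factor. For the lower-bound direction, the tightened-barrier survival event and the good-coupling event are both decreasing functions of the increments of the underlying independent-increment process, so the FKG inequality (Theorem~\ref{thm:fkg}) produces the needed product bound; the coupling-failure contribution $T^{-q}$ is dominated using the polynomial lower bound from Proposition~\ref{prop:skor} and Remark~\ref{noexpdecay} by choosing $q$ large enough.

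The main obstacle is Step~3: the coupling error after integration is polynomial in $T$, whereas the target loss is only polylogarithmic. Overcoming this requires the drift argument to be sharp enough to absorb a barrier perturbation of order $T^\alpha\log T$ into a factor $(\log T)^{O(1)}$; the specific exponent $2(1+\alpha)$ in~(\ref{eqn:logterms}) should be traceable to one factor $(\log T)^{1+\alpha}$ from the kernel integration combined with a matching factor from the drift/barrier step.
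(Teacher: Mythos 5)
There is a genuine gap at Step 3, and you have in fact put your finger on it yourself: once the coupling error is pushed through $\cI$, the barrier perturbation is of order $T^\alpha\log T$, which is polynomially large compared with the barrier $1$, and no drift or scaling argument can absorb that into a polylog factor. Concretely, in the fractionally integrated case self-similarity of index $\alpha+1/2$ gives $\IP(\sup_{t\le T}\cI(W)_t\le\lambda)=\IP(\sup_{t\le T\lambda^{-1/(\alpha+1/2)}}\cI(W)_t\le 1)$, so a barrier $\lambda\approx T^\alpha\log T$ shortens the effective horizon to roughly $T^{1/(2\alpha+1)}$ --- a polynomial, not polylogarithmic, change in the survival probability (for $\alpha=1$ this would compare $T^{-1/12}$ with the target $T^{-1/4}$). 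The Cameron--Martin estimate of Proposition~\ref{propo:drift} fares no better: its loss factor is $\exp\bigl(\sqrt{2\norm{f}^2\log(1/\IP(X\in S))}\bigr)$, which is super-polynomial as soon as $\norm{f}$ grows polynomially in $T$; moreover a constant barrier shift is not an admissible RKHS drift, and for a general kernel $K$ the process $\cI(W)$ is not even self-similar. Your ``relative barrier change of order $T^{-1/2}\log T$'' normalizes by the typical size $T^{\alpha+1/2}$ of the process, which is irrelevant here: the survival event pins the process below the fixed level $1$, not below its own scale.

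The paper avoids this by never transferring the coupling error through $\cI$ over all of $[0,T]$. It sacrifices an initial window $[0,T_0]$ with $T_0\approx(\log T)^2$, on which it forces (i) $X$ to descend along a linear ramp to below $-a_T-\delta_2\sigma$ (event $E_1$, with $a_T\approx\log T$ the KMT error) and (ii) $\sup_{[0,T_0]}X_t\le c_1T_0^{-\alpha}$ (event $E_2$), so that the contribution of $[0,T_0]$ to $\cI(X)_t$ is nonpositive for $t\ge T_0$ and $X$ enters $[T_0,T]$ with a head start of $-a_T-\delta_2\sigma$. On $[T_0,T]$ the coupling error is then absorbed at the level of $X$ itself, \emph{before} integration, by this head start; the residual $-\delta_2\sigma$, convolved with $K$ over a window of length $\delta_1$, pays for the requirement $\cI(\cdot)\le 1$. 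The FKG inequality (Theorem~\ref{thm:fkg}) is used to decouple the two decreasing events $E_1$ and $E_2$ (not, as you suggest, to decouple the survival event from the coupling event, which is not a monotone functional of a single independent-increment process), and the total cost is $\IP(E_1\cap E_2)\succsim T_0^{-(\alpha+1)}=(\log T)^{-2(\alpha+1)}$ --- the true source of the exponent $2(1+\alpha)$. Your Steps 1 and the use of Remark~\ref{noexpdecay} to dominate the coupling-failure probability are consistent with the paper, but without the sacrificial-window construction the argument does not close.
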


This suggests the question up to which order the two expressions can differ over the class $\cX$.

\begin{proof}[ of Theorem~\ref{theo:logestimates}]
For an arbitrary fixed process  $X$ from the class $\cX$ and a Wiener process $W$, we shall show that
\begin{equation} \label{eqn:as1a}
\IP(\sup_{t\in J\cap [0,T]} \cI(X)_t\leq 1) \ge c\, (\log T)^{- 2(\alpha+1)}\, \IP(\sup_{t\in J\cap [0,T]} \cI(W)_t \leq 1),
\end{equation}
for $T$ large enough and some constant $c>0$. The opposite bound follows by the same method when exchanging  the roles of $W$ and $X$.\smallskip

\emph{Step 1:} In the first step, we derive one of the key techniques used in the proof (an appropriate coupling of $X$ and $\sig W$ with $\sig>0$ and $\sig^2=\var(X_1)$) from the Koml\'os-Major-Tusn\'ady coupling. 
Since $X_1$ has finite exponential moments in a neighborhood of zero, one can couple the process $X$ with $\sig W$, by the KMT theorem \cite{kmt}, for each fixed $T\in\IN$ such that 
there exist positive constants $\beta_1,\beta_2$ not depending on $T$ with
\begin{align}\label{eq1111-1}
\IE \bigl[\exp( \beta_1 \sup_{t\in \{0,\dots,T\}} |X_t- \sig W_t| )\bigr] \leq \exp( \beta_2 \log (T\vee e)).
\end{align}
As we indicate next, we can take the supremum in the last equation equally well over the interval $[0,T]$ with $T\in(0,\infty)$ (possibly with different constants $\beta_1,\beta_2$).
If $X$ is a L\'evy martingale, then  we get with Doob's inequality for~$\beta_3>0$ that
$$
\IE\bigl[\exp( \beta_3 \sup_{t\in[0,1]} |X_t|)\bigr]= \IE \Bigl[ \Bigl( \sup_{t\in[0,1]}e^{ \frac{\beta_3}2  |X_t|}\Bigr)^2\Bigr]\le 4 \, \IE\bigl[ e^{ \beta_3 |X_1|}\bigr].
$$
Consequently,
$$
\IE \Bigl[ \sup_{t\in\{1,\dots,T\}} \exp( \beta_3 \sup_{s\in[t-1,t]} |X_s-X_{t-1}|)\Bigr] \le T\, \IE\bigl[\exp ( \beta_3 \sup_{t\in[0,1]} |X_t|)\bigr] \leq 4  T \, \IE\bigl[ e^{ \beta_3 |X_1|}];
$$
and the right hand side is finite as long as $\beta_3$ is sufficiently small. One gets an analogous estimate when replacing the L\'evy process by the Wiener process. Now, an application of the triangle inequality together with straightforward calculations yield the mentioned stronger version of~(\ref{eq1111-1}).

We fix $\rho>\alpha+1/2$. By the exponential Chebyshev inequality, we get for $T\ge e$ and arbitrary $a>0$
$$
\IP(\sup_{t\in [0,T]} |X_t- \sig W_t| \ge a) \leq e^{-\beta_1 a} \,T^{\beta_2}
$$
which implies  for $a_T:=\frac{\beta_2 +\rho}{\beta_1} \log T$ that
\begin{align}\label{eq0801-1}
\IP\bigl(\sup_{t\in  [0,T]} |X_t-\sig W_t|\ge a_T \bigr) \leq T^{-\rho}.
\end{align}

\emph{Step 2:} In order to prove (\ref{eqn:as1a}), we consider a particular scenario for which  $\sup_{t\in[0,T]} \cI(X)_t\le1$ is satisfied. We couple $X$ and $\sig W$ on the time interval $[0,T_0]$ as described above. Moreover, we apply the same coupling for the two processes $(X_t-X_{T_0})_{t\in [T_0,T]}$ and $(\sig W_t-\sig W_{T_0})_{t\in[T_0,T]}$. Certainly, both couplings can be established on a common probability space in such a way that the random variables involved in the first coupling are independent from the ones involved in the second coupling.

We fix $\delta_1,\delta_2>0$ with $\delta_2 \int_0^{\delta_1} K(s)\dd s\ge 1$
and consider the barriers
$$
\bar g_T(t):=1-\frac t{\sqrt{T_0}} +a_T \ \text{ and } \  g_T(t):=1-\frac t{\sqrt{T_0}},
$$
where  $T_0=T_0(T)= \lceil \left(2a_T+\delta_2\sig+1\right)^2 \rceil$. Then $\bar g_T(T_0)\le -a_T-\delta_2\sig$.

As we will show next, for any sufficiently large $T$, the event  $\{\sup_{t\in J\cap [0,T]} \cI(X)_t\le 1\}$ occurs at least if all of the following events occur:
$$
E_1=\{X\le \bar g_T \text{ on }[0,T_0]\}, \qquad E_2=\{\sup_{t\in[0,T_0]} X_t\le  c_1\, T_0^{-\alpha} \}, 
$$
$$
E_3=\{\sup_{t\in J \cap[0, T-T_0]} \cI(W_{\cdot+T_0}-W_{T_0}) \le 1\}, \ \text{ and } \ E_4=\{\sup_{t\in[T_0,T]} |X_t-X_{T_0}-\sig (W_t-W_{T_0})|\le a_T\},
$$
where $c_1>0$ is a finite constant  with $\int_0^t K(s)\dd s \le c_1 t^\alpha$ for all $t\ge 1$.
Indeed, $E_1$ and $E_2$ imply (together with the regularity assumption on $K$) that
\begin{align}\label{eq1117-1}
\int_0^{T_0-\delta_1} K(t-s)X_s \dd s \le 0 \ \text{ for } t\ge T_0 \ \text{ and } \ X\le -\delta_2\sig \text{  on }[T_0-\delta_1,T_0],
\end{align}
 as long as $T$ (or equivalently $T_0$) is sufficiently large. Moreover,  given that also  $E_4$ occurs, one has for $t\in[T_0,T]$, 
$$X_t\le X_{T_0}+\sig(W_t-W_{T_0})+a_T\le -\delta_2 \sig+ \sig( W_t-W_{T_0}),
$$
so that
$$
\int_ {T_0}^t  K(t-s) X_s \dd s \le \sig \int_{T_0}^t K(t-s) \bigl[W_s-W_{T_0}-\delta_2 \bigr]  \dd s.
 $$
Assuming additionally $E_3$, we conclude with  (\ref{eq1117-1}) that, for all $t\in J\cap [T_0,T]$,
\begin{align*}
\cI(X)_t & = \int_0^{T_0-\delta_1} K(t-s) X_s \dd s + \int_{T_0-\delta_1}^{T_0} K(t-s) X_s \dd s + \int_{T_0}^t  K(t-s) X_s \dd s\\
         & \leq 0 + \int_{T_0-\delta_1}^{T_0} K(t-s) (-\delta_2 \sigma) \dd s + \sigma \int_{T_0}^t  K(t-s)  \bigl[W_s-W_{T_0}-\delta_2 \bigr]  \dd s\\
         & = - \sigma \delta_2\int_{T_0-\delta_1}^{t} K(t-s) \dd s + \sigma \int_{0}^{t-T_0}  K(t-T_0-s)  \bigl[W_{s+T_0}-W_{T_0} \bigr]  \dd s\\
         & \leq  - \sigma \delta_2 \int_0^{\delta_1} K(s) \dd s + \sigma \cdot 1  \\
         & \leq -\sigma+\sigma\leq 1,
\end{align*}
 as long as $T$ is sufficiently large. Note that $\cI(X)_t \leq 1$ also holds on $J\cap [0,T_0]$ due to $E_2$, see Remark~\ref{noexpdecay}.\smallskip

\emph{Step 3:} It remains to estimate the probability of $E_1\cap\dots\cap E_4$. First we estimate $\IP(E_1\cap E_2)$. 
Note that $\ind_{E_1}$ and $\ind_{E_2}$ can both be written as limits of decreasing functions in the sense of Section~\ref{sec:fkg}. Hence, by Theorem~\ref{thm:fkg}, we have 
$\IP(E_1\cap E_2) \ge \IP(E_1)\cdot\IP(E_2)$.  By Remark~\ref{noexpdecay}, we have $\IP(E_2)\succsim T_0^{-\alpha-1/2}$. Moreover, the event $E_1$ occurs whenever the events
$$
E_1'=\{\forall t\in[0,T_0]: \sigma W_t \le g_T\} \ \text{ and } \
E_1''=\{\sup_{t\in[0,T_0]} |X_t-\sig W_t|\le a_T\} 
$$
occur; and we thus have
$$
\IP(E_1)\ge \IP(E_1'\cap E_1'')\ge \IP(E_1')-\IP({E_1''}^c).
$$
By Lemma~\ref{lem:apbm} and by inequality (\ref{eq0801-1}), one has $\IP(E_1')\succsim T_0^{-1/2}$ and $\IP({E_1''}^c)\precsim T^{-\rho}_0$, respectively, so that 
$\IP(E_1)\succsim T_0^{-1/2}$. Altogether we thus obtain
\begin{align}\label{eq1117-2}
\IP(E_1\cap E_2)\succsim T_0^{-(\alpha+1)} \approx (\log T)^{-2(\alpha+1)}.
\end{align}
Moreover, $E_3 \cap E_4$ is independent of $E_1\cap E_2$ and 
$$
\IP(E_3\cap E_4) \ge \IP(E_3) -\IP(E_4^c)\succsim \IP(\sup_{t\in J \cap [0,T]} \cI(W)_t \le 1), 
$$
since  $\IP(E_4^c)\le T^{-\rho}$ is of lower order than $\IP(E_3)\succsim T^{-(\alpha+1/2)}$, see Remark~\ref{noexpdecay}.
Combining this with~(\ref{eq1117-2}) finishes the proof.
\end{proof}

\section{Drift and barriers} \label{sec:drift}
\subsection{The influence of a drift on Gaussian processes}
In this section, we study the influence of a drift on the survival exponent. We show that one can safely add a drift of a certain strength without changing the survival exponent. However, the technique can be formulated rather generally in terms of the reproducing kernel Hilbert space of the Gaussian process.

\begin{propo} \label{propo:drift}
Let $X$ be some centered Gaussian process attaining values in the Banach space $E$ with reproducing kernel Hilbert space $\cH$. Denote by $\norm{.}$ the norm in $\cH$. Then, for each $f\in \cH$ and each measurable $S$ such that $\pr{ X \in S }>0$, we have
$$
e^{- \sqrt{ 2 \norm{f}^2 \log (1/\pr{ X \in S }) } - \frac{\norm{f}^2}{2}}\leq \frac{\pr{ X+f \in S }}{\pr{ X \in S }} \leq  e^{ \sqrt{ 2 \norm{f}^2 \log (1/\pr{ X \in S }) } - \frac{\norm{f}^2}{2}}.
$$
\end{propo}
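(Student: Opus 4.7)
The plan is to recognize the proposition as a quantitative form of the Cameron--Martin shift formula for the Gaussian measure $\Law(X)$: derive the upper bound by an optimized H\"older inequality, and then bootstrap the lower bound from the upper bound rather than redo any optimization.

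First, I would introduce the Paley--Wiener functional $L_f$ associated with $f\in\cH$: on the underlying probability space this is a real-valued centered Gaussian random variable with variance $\|f\|^2$, and the classical Cameron--Martin theorem gives the shift formula
$$
\pr{X+f\in S}=\IE\bigl[\exp(L_f(X)-\tfrac12\|f\|^2)\indi{X\in S}\bigr].
$$
In this form both inequalities become statements about the expectation of a log-normal Radon--Nikodym density against the indicator of $\{X\in S\}$.

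For the upper bound I would apply H\"older's inequality with conjugate exponents $p$ and $q=p/(p-1)$, $p>1$. The Gaussian moment generating function gives $\IE[\exp(pL_f(X)-\tfrac{p}{2}\|f\|^2)]^{1/p}=\exp(\tfrac12(p-1)\|f\|^2)$, so, writing $a=\log(1/\pr{X\in S})$ and $\sigma^2=\|f\|^2$,
$$
\log\frac{\pr{X+f\in S}}{\pr{X\in S}}\le \tfrac12(p-1)\sigma^2+\frac{a}{p}.
$$
Minimizing the right-hand side over $p\ge 1$, the minimum is attained at $p_*=\sqrt{2a}/\sigma$ (at least when $2a\ge\sigma^2$, which is the regime of interest), and the value is $\sqrt{2\sigma^2 a}-\sigma^2/2=\sqrt{2\|f\|^2\log(1/\pr{X\in S})}-\|f\|^2/2$, which is exactly the claimed upper bound.

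For the lower bound, rather than dualize H\"older, I would reapply the upper bound just proved to the pair $(-f,\,S-f)$. Using the identities $\pr{X-f\in S-f}=\pr{X\in S}$ and $\pr{X\in S-f}=\pr{X+f\in S}$, this yields
$$
\pr{X\in S}\le \pr{X+f\in S}\cdot\exp\Bigl(\sqrt{2\|f\|^2\log(1/\pr{X+f\in S})}-\tfrac12\|f\|^2\Bigr),
$$
which is already a lower bound on $\pr{X+f\in S}/\pr{X\in S}$, but expressed in terms of $\log(1/\pr{X+f\in S})$ rather than $\log(1/\pr{X\in S})$. The remaining step, which I expect to be the only genuinely nontrivial one, is to convert this into the claimed form. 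Setting $r:=\log(\pr{X\in S}/\pr{X+f\in S})$ (we may assume $r\ge 0$, otherwise the lower bound is trivial), the displayed inequality reads $r+\sigma^2/2\le \sqrt{2\sigma^2(a+r)}$; both sides are nonnegative, so squaring and simplifying yields $(r-\sigma^2/2)^2\le 2\sigma^2 a$, whence $r\le \sigma^2/2+\sqrt{2\sigma^2 a}$, which is the claimed lower bound after exponentiation. Thus the hard part of the proof is really this bootstrap together with the completing-the-square estimate; the probabilistic input is just the Cameron--Martin formula plus the Gaussian Laplace transform.
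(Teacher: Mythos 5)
Your argument is correct, and the upper bound is essentially the paper's proof: Cameron--Martin formula, H\"older's inequality, Gaussian moment generating function, and optimization of the exponent (your $p$ is the paper's conjugate exponent $q$, so the two optimizations coincide). The lower bound is where you take a genuinely different route. The paper applies the \emph{reverse} H\"older inequality directly to the Cameron--Martin integral and optimizes a second time; you instead bootstrap from the already-proved upper bound via the substitution $(f,S)\mapsto(-f,S-f)$, using the set identities $\{X-f\in S-f\}=\{X\in S\}$ and $\{X\in S-f\}=\{X+f\in S\}$, and then complete the square to convert $\log(1/\pr{X+f\in S})$ back into $\log(1/\pr{X\in S})$. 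This avoids the less standard reverse H\"older inequality and makes the duality between the two bounds transparent, at the price of the algebraic conversion and of one degenerate case you do not mention: your optimized upper bound is only available (as you yourself note) when $2\log$ of the reciprocal probability dominates $\norm{f}^2$, so in the bootstrap you need $2\log(1/\pr{X+f\in S})\ge\norm{f}^2$; when this fails one has $r:=\log(\pr{X\in S}/\pr{X+f\in S})<\norm{f}^2/2$ and the claimed lower bound holds trivially, so this is a one-line patch rather than a real gap. Your caveat about the regime $2\log(1/\pr{X\in S})\ge\norm{f}^2$ is in fact a point of extra care compared with the paper, whose choice $1/p=1-\sqrt{\norm{f}^2/(2\log(1/\pr{X\in S}))}$ silently requires the same condition (and the stated upper bound does fail outside it, e.g.\ for $S=E$); this regime is irrelevant for the intended application, where $\pr{X\in S_T}$ decays polynomially.
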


This statement allows to estimate $\pr{ X+f \in S }$ by the respective probability without drift. Of course, we are interested in the set
$$
S:=S_T:=\lbrace (x_t)_{0\leq t\leq T} \, :\, \sup_{t\in[0,T]} \cI(x)_t \leq 1\rbrace,
$$
where $\cI$ is a functional as specified above. If the order of $\pr{ X \in S_T }$, when $T\to\infty$, is polynomial with exponent $\theta$ then, by Proposition~\ref{propo:drift}, the same holds for  $\pr{ X+f \in S_T }$. Below we will give some examples.

\begin{proof}[ of Proposition~\ref{propo:drift}]
Using the notation from \cite{lifshits}, the Cameron-Martin formula says that
\begin{equation} \label{eqn:changeofmeasure}
\pr{ X+f \in S } = \E \left[\indi{X\in S} e^{ \langle z,X\rangle - \frac{\norm{f}^2}{2} } \right].
\end{equation}
where $z$ in the $L_2$-completion of the dual of $E$ is the functional belonging to the admissable shift $f$, see \cite{lifshits}.

{\it Upper bound.} Let $p>1$ and $1/p+1/q=1$. We use the H\"{o}lder inequality in (\ref{eqn:changeofmeasure}) to get
$$
\pr{ X+f \in S } \leq (\E [\indi{X\in S}^p] )^{1/p} (\E [ e^{ q  \langle z,X\rangle} ] )^{1/q} e^{ - \frac{\norm{f}^2}{2}},
$$
Recall that $\langle z, X\rangle$ is a centered Gaussian random variable with variance $\norm{f}^2$. Therefore, we get
\begin{equation} \label{eqn:lh34}
\pr{ X+f \in S } \leq \pr{X\in S}^{1/p} e^{ q\, \frac{\norm{f}^2}{2}  - \frac{\norm{f}^2}{2}}.
\end{equation}
Optimizing in $p$ shows that the best choice is
$$
1/p:=1-\sqrt{ \frac{\norm{f}^2}{2 \log (1/\pr{X\in S})}}<1. 
$$
Plugging this into (\ref{eqn:lh34}) shows the upper bound in the proposition.

{\it Lower bound.} Here we let $p>1$ and use the reverse H\"{o}lder inequality in (\ref{eqn:changeofmeasure}) to get
$$
\pr{ X+f \in S } \geq (\E [\indi{X\in S}^{1/p}] )^{p} (\E [ e^{ -\frac{1}{p-1}  \langle z,X\rangle} ] )^{-(p-1)} e^{ - \frac{\norm{f}^2}{2}},
$$
As above, we can calculate the second expectation, optimize in $p$ to find that the best choice is
$$
p:=1+\sqrt{ \frac{\norm{f}^2}{2 \log (1/\pr{X\in S})}}>1.
$$
Using this shows the lower bound.
\end{proof}

\subsection{Examples} \label{sec:driftvsbarrier}
Our first example is Brownian motion.

\begin{corollary} Let $W$ be a Brownian motion, $\cI$ be a functional as specified above, $f' : [0,\infty)\to\R$ be a measurable function with $\int_0^\infty f'(s)^2\dd s<\infty$, and set $f(t):=\int_0^t f'(s)\dd s$. Let $\theta>0$. Then
$$
\pr{ \sup_{t\in[0,T]} \cI( W)_t \leq 1}=T^{-\theta+o(1)}\quad \text{if and only if}\quad \pr{ \sup_{t\in[0,T]} \cI( W+ f)_t \leq 1} = T^{-\theta+o(1)}.
$$
Also, upper (lower) bounds imply upper (lower) bounds.  \label{cor:drift2}
\end{corollary}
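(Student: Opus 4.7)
The plan is to deduce the corollary as a direct application of Proposition~\ref{propo:drift} to Brownian motion: since $\|f\|_\cH$ is bounded uniformly in $T$, the multiplicative drift correction is absorbed into a $T^{o(1)}$ factor.

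For each $T>0$, set
$$
S_T := \{ x\in C[0,T] : \sup_{t\in[0,T]} \cI(x)_t \leq 1\},
$$
which is measurable by continuity of $\cI$, so that $\pr{\sup_{t\in[0,T]} \cI(W)_t\leq 1}=\pr{W\in S_T}$ and $\pr{\sup_{t\in[0,T]} \cI(W+f)_t\leq 1}=\pr{W+f\in S_T}$. The reproducing kernel Hilbert space of $W$ on $[0,T]$ is the Cameron--Martin space of absolutely continuous functions $g$ with $g(0)=0$ and $\|g\|_\cH^2=\int_0^T g'(s)^2\dd s$, so the hypothesis $\int_0^\infty f'(s)^2\dd s<\infty$ yields
$$
\|f\|_\cH^2 \;\leq\; C\;:=\;\int_0^\infty f'(s)^2\dd s \;<\;\infty
$$
uniformly in $T$.

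Applying Proposition~\ref{propo:drift} with $X=W$, shift $f$, and set $S_T$, and then taking logarithms, gives the two-sided estimate
$$
\Bigl|\log \pr{W+f\in S_T}-\log \pr{W\in S_T}\Bigr| \;\leq\; \sqrt{2C\,\log(1/\pr{W\in S_T})}+\tfrac{C}{2}.
$$
By Remark~\ref{noexpdecay} applied to $W\in\cX$ we have $\pr{W\in S_T}\succsim T^{-(\alpha+1/2)}$, hence $\log(1/\pr{W\in S_T})=O(\log T)$, and the right-hand side is therefore $O(\sqrt{\log T})=o(\log T)$. Dividing by $\log T$ and letting $T\to\infty$ shows that $\log\pr{W\in S_T}/\log T\to -\theta$ if and only if $\log\pr{W+f\in S_T}/\log T\to -\theta$, which is the equivalence claimed. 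The very same inequality transfers a polynomial upper (resp.\ lower) bound for one probability into the corresponding bound for the other.

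There is no substantive obstacle: the proof is a one-step invocation of Proposition~\ref{propo:drift}. The only items to verify are the standard identification of the Brownian RKHS (yielding the uniform-in-$T$ norm bound $\|f\|_\cH^2\leq C$) and the a priori estimate $\pr{W\in S_T}\succsim T^{-(\alpha+1/2)}$ from Remark~\ref{noexpdecay}, which is what ensures that the square-root correction is genuinely $o(\log T)$ rather than of order $\log T$.
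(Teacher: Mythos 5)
Your proposal is correct and follows essentially the same route as the paper: the paper proves Corollary~\ref{cor:drift2} by applying Proposition~\ref{propo:drift} to the set $S_T$ with the uniform Cameron--Martin bound $\norm{f}^2\leq\int_0^\infty f'(s)^2\dd s$, so that the multiplicative correction $e^{\pm\sqrt{2\norm{f}^2\log(1/\pr{W\in S_T})}-\norm{f}^2/2}$ is $T^{o(1)}$. Your explicit invocation of Remark~\ref{noexpdecay} to guarantee $\log(1/\pr{W\in S_T})=O(\log T)$ unconditionally (so that both directions of the equivalence go through) is a point the paper leaves implicit, but it is the same argument.
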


Let us discuss some important examples of drift functions.

\begin{example}
The first example is $f(t)=t/\sqrt{T}$. Then Proposition~\ref{propo:drift} yields
$$
c T^{-1/2}\geq \pr{ \forall t\leq T\,:\, W_t \leq 1 - \frac{t}{\sqrt{T}}} \succsim T^{-1/2} e^{- \sqrt{\log T}} = T^{-1/2+o(1)}.
$$
Actually, a slightly stronger result holds, cf.\ Lemma~\ref{lem:apbm}.
\end{example}

\begin{example}
It is interesting that one can add drift functions up to $|f(t)|\precsim t^\gamma$, $t\to\infty$, with $\gamma<1/2$. Namely, Corollary~\ref{cor:drift2} yields that for any $c\in \R$ and $0\leq \gamma \leq \frac12$:
$$
\pr{ \sup_{t\in[0,T]} ( W_t + c t^\gamma) \leq 1} = T^{-1/2+o(1)}
$$
and
$$
\pr{ \sup_{t\in[0,T]} ( \int_0^t W_s \dd s + c t^{1+\gamma}) \leq 1} = T^{-1/4+o(1)}.
$$
We remark that the latter statement improves Sina\u\i's result \cite{sinai1991} who showed the statement for $\gamma=0$.
\end{example}

As a further example for a Gaussian process, let us consider the $\alpha$-fractionally integrated Brownian motion defined in (\ref{eqn:fbmvsfibm}). Here, one can add drift functions up to $|f(t)|\preceq t^{\gamma}$, $\gamma<H=\alpha+1/2$.

\begin{corollary} Let $R^\alpha=\cI_\alpha(W)$ be an $\alpha$-fractionally integrated Brownian motion, and let $f' : [0,\infty) \to \R$ be a function with $\int_0^\infty f'(s)^2\dd s<\infty$. Let $\theta>0$ and define
$$
g(t):=\frac{1}{\Gamma(\alpha+1)}\,\int_0^t (t-s)^{\alpha} f'(s)\dd s, \qquad t\geq 0.
$$ Then
$$
\pr{ \sup_{t\in[0,T]} R^\alpha_t \leq 1}=T^{-\theta+o(1)}\quad \text{if and only if}\quad \pr{ \sup_{t\in[0,T]}(R^\alpha_t+g(t)) \leq 1} = T^{-\theta+o(1)}.
$$
Also, upper (lower) bounds imply upper (lower) bounds. \label{cor:drift3}
\end{corollary}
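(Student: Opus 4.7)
The plan is to deduce this as a direct consequence of Proposition~\ref{propo:drift} applied to the centered Gaussian process $X = R^\alpha$ (viewed as an element of a suitable Banach space, e.g.\ $C[0,T]$) together with the set
\[
S_T := \bigl\{ x : \sup_{t\in[0,T]} x_t \leq 1 \bigr\}
\]
and the drift $g$. The entire difficulty reduces to showing that $g$ lies in the reproducing kernel Hilbert space $\cH_{R^\alpha}$ of $R^\alpha$ and has finite $\cH_{R^\alpha}$-norm; everything else is parallel to what would be needed for Corollary~\ref{cor:drift2}.

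For the identification of the RKHS, I would invoke the standard fact that if an $E$-valued centered Gaussian process $Y$ is obtained from another such process $Z$ via a deterministic bounded injective linear map $L$, then the RKHS of $Y$ is $L(\cH_Z)$ with $\norm{L h}_{\cH_Y} = \norm{h}_{\cH_Z}$. Applied to $L = \cI_\alpha$ and $Z = W$, this identifies $\cH_{R^\alpha}$ as the image $\cI_\alpha(\cH_W)$, where $\cH_W$ is the Cameron--Martin space of Brownian motion consisting of functions $h(t) = \int_0^t h'(s) \dd s$ with $h' \in L^2$, with $\norm{h}_{\cH_W}^2 = \int_0^\infty h'(s)^2 \dd s$. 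Setting $h(t) := \int_0^t f'(s) \dd s$, a Fubini computation yields
\[
\cI_\alpha(h)(t) = \frac{1}{\Gamma(\alpha)}\int_0^t (t-s)^{\alpha-1} \int_0^s f'(u) \dd u \dd s = \frac{1}{\Gamma(\alpha+1)} \int_0^t (t-u)^\alpha f'(u) \dd u = g(t),
\]
so that $g \in \cH_{R^\alpha}$ with $\norm{g}_{\cH_{R^\alpha}}^2 = \int_0^\infty f'(s)^2 \dd s < \infty$.

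With this in hand, Proposition~\ref{propo:drift} provides two-sided control on the ratio $\pr{R^\alpha + g \in S_T}/\pr{R^\alpha \in S_T}$ by factors of the form $\exp( \pm \sqrt{2 \norm{g}^2 \log(1/\pr{R^\alpha \in S_T})} - \norm{g}^2/2 )$. If $\pr{R^\alpha \in S_T} = T^{-\theta+o(1)}$, then $\log (1/\pr{R^\alpha \in S_T}) = \theta \log T + o(\log T)$, so the exponent is $O(\sqrt{\log T}) = o(\log T)$ and the ratio is $T^{o(1)}$; hence the numerator has the same polynomial exponent. The reverse implication follows from the very same bound read as $\log \pr{R^\alpha + g \in S_T} = \log \pr{R^\alpha \in S_T} + O(\sqrt{\log T})$: dividing by $\log T$ and letting $T \to \infty$ shows the two exponents agree whenever either one exists. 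The one-sided (upper/lower) bound statements follow by using, respectively, the upper and lower halves of Proposition~\ref{propo:drift}.

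The main obstacle I anticipate is the RKHS identification: one must choose the Banach space $E$ carefully so that $R^\alpha$ lives in $E$ and $\cI_\alpha$ acts boundedly, and then verify the general principle that the RKHS of $\cI_\alpha(W)$ equals $\cI_\alpha(\cH_W)$ isometrically. This is routine via the covariance-operator definition of the RKHS (since the covariance of $\cI_\alpha(W)$ is $\cI_\alpha \cI_\alpha^*$ with $\cI_\alpha^*$ acting on the dual), but it is the one step that requires genuine care; once it is granted, the Fubini calculation and the application of Proposition~\ref{propo:drift} are immediate.
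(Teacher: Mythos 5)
Your argument is correct, but it routes through a harder identification than the paper needs. The paper obtains this corollary (and Corollary~\ref{cor:drift2}) by applying Proposition~\ref{propo:drift} to $X=W$ with the set $S_T=\{x:\sup_{t\in[0,T]}\cI_\alpha(x)_t\le 1\}$ and the shift $f(t)=\int_0^t f'(s)\dd s$: since $R^\alpha+g=\cI_\alpha(W+f)$ (your Fubini computation, which is exactly the content of the definition of $g$), the event $\{\sup_t(R^\alpha_t+g(t))\le 1\}$ is precisely $\{W+f\in S_T\}$, so only the Cameron--Martin space of Brownian motion enters and $\norm{f}^2=\int_0^\infty f'(s)^2\dd s$ is immediate. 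You instead apply the proposition to $X=R^\alpha$ with $S_T=\{x:\sup_t x_t\le1\}$, which forces you to identify $\cH_{R^\alpha}=\cI_\alpha(\cH_W)$ isometrically; that is true (and your appeal to the image-of-a-Gaussian-measure-under-an-injective-linear-map principle, plus injectivity of fractional integration, makes it rigorous), but it is exactly the ``genuine care'' step you flag as the main obstacle, and the paper's formulation of Proposition~\ref{propo:drift} for an arbitrary measurable set $S$ is designed to make it unnecessary. Both routes give the same correction factor $\exp(O(\sqrt{\log T}))=T^{o(1)}$, and your handling of the asymptotics, including the inversion needed for the converse implication and the one-sided statements, is fine.
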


We remark that these results are extremely useful when dealing with different `barriers'. As a `barrier' we consider a function $F : [0,\infty)\to (-\infty,\infty]$ and ask when
$$
\pr{ \forall t\leq T : X_t \leq F(t)}, \qquad \text{as $T\to\infty$},
$$
has the same asymptotics as
$$
\pr{ \forall t\leq T : X_t \leq 1}, \qquad \text{as $T\to\infty$}.
$$
Note that e.g.\ Corollary~\ref{cor:drift3} states the following: it is allowed to replace the barrier $F(t):=1-g(t)$ if $g$ is from the reproducing kernel Hilbert space. Estimates can be obtained if one can find $g$ in the reproducing kernel Hilbert space such that $F(t)\geq 1-g(t)$ or $F(t)\leq 1-g(t)$. We demonstrate this method with the following important example.

\begin{example} \label{exa:drifttrick} Let us consider the barrier
$$
F(t):=\begin{cases}
       \infty & 0\leq t< 1,\\
       0      & 1\leq t \leq T,
      \end{cases}
$$
for the process $R^\alpha=\cI_\alpha(W)$ defined in (\ref{eqn:fbmvsfibm}).
\begin{corollary} \label{cor:drifttrick}
 Let $R^\alpha$ be the $\alpha$-fractionally integrated Brownian motion. Then, for a $\theta>0$,
$$
\pr{ \sup_{t\in [1,T]} R^\alpha_t \leq 0} = T^{-\theta+o(1)}
$$
if and only if
$$
\pr{ \sup_{t\in [0,T]} R^\alpha_t \leq 1} = T^{-\theta+o(1)}.
$$
Also, upper (lower) bounds imply upper (lower) bounds.
\end{corollary}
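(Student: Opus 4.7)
My plan is to sandwich $\pr{\sup_{t\in[1,T]}R^\alpha_t\le 0}$ between two quantities with the same polynomial rate as $\pr{\sup_{t\in[0,T]}R^\alpha_t\le 1}$: the upper sandwich will come from an FKG argument, the lower one from the drift machinery of Corollary~\ref{cor:drift3}. Throughout, write $A_T=\{\sup_{t\in[1,T]}R^\alpha_t\le 0\}$ and $C_T=\{\sup_{t\in[0,T]}R^\alpha_t\le 1\}$.

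For the inequality $\pr{A_T}\precsim \pr{C_T}$, let $B=\{\sup_{t\in[0,1]}R^\alpha_t\le 1\}$, so that $c:=\pr{B}>0$ is a constant independent of $T$. Because $R^\alpha=\cI_\alpha(W)$ is a non-negative linear functional of the underlying Brownian motion $W$, both $A_T$ and $B$ are decreasing events in the increments of $W$ in the sense of Section~\ref{sec:fkg}: enlarging every increment of $W$ enlarges $R^\alpha_t$ pointwise. A standard discretization of time, followed by Theorem~\ref{thm:fkg} applied to $W$ and monotone convergence (which uses the continuity of $R^\alpha$), yields
\[
\pr{A_T\cap B}\ge \pr{A_T}\,\pr{B}.
\]
Since $A_T\cap B\subset C_T$ (under $B$ the process is $\le 1$ on $[0,1]$, under $A_T$ it is $\le 0\le 1$ on $[1,T]$), we obtain $c\,\pr{A_T}\le \pr{C_T}$.

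For the converse $\pr{A_T}\succsim \pr{C_T}$, modulo the $o(1)$ in the exponent, I construct an admissible drift $g\in\cH$ with $g(t)\ge 1$ for every $t\ge 1$. Take $f'(s)=\Gamma(\alpha+2)\,\ind_{[0,1]}(s)\in L^2([0,\infty))$ and plug it into the formula of Corollary~\ref{cor:drift3}; this gives the explicit function
\[
g(t)=t^{\alpha+1}-\bigl((t-1)_+\bigr)^{\alpha+1},\qquad t\ge 0,
\]
for which $g(1)=1$ and $g'(t)=(\alpha+1)[t^\alpha-(t-1)^\alpha]\ge 0$ on $[1,\infty)$, so $g\ge 1$ on $[1,\infty)$. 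Whenever $\sup_{t\in[0,T]}(R^\alpha_t+g(t))\le 1$ one therefore has $R^\alpha_t\le 1-g(t)\le 0$ for every $t\in[1,T]$, so $\{\sup_{t\in[0,T]}(R^\alpha_t+g(t))\le 1\}\subset A_T$. Corollary~\ref{cor:drift3} now asserts that the polynomial rate of $\pr{\sup_{t\in[0,T]}(R^\alpha_t+g(t))\le 1}$ equals that of $\pr{C_T}$, and that upper and lower bounds are preserved; combining this with the previous step proves both the iff and the preservation of one-sided bounds.

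The main obstacle is producing an admissible $g\in\cH$ that matches the jump of the barrier from $\infty$ to $0$ at $t=1$ while staying globally in the reproducing kernel Hilbert space; the explicit $L^2$-ansatz above settles this by exploiting the elementary monotonicity of fractional integration (both for $\alpha=0$, where $g(t)=t\wedge 1$, and for $\alpha>0$). The FKG step is routine once one verifies monotonicity of $R^\alpha$ in the increments of $W$, and the passage from finite grids to continuous-time events requires only the continuity of $R^\alpha$.
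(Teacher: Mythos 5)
Your proof is correct and follows essentially the same route as the paper: the hard direction uses the identical drift $f'=\Gamma(\alpha+2)\ind_{[0,1]}$, the same explicit $g(t)=t^{\alpha+1}-((t-1)_+)^{\alpha+1}\ge 1$ on $[1,\infty)$, and the same appeal to Corollary~\ref{cor:drift3}. The only (cosmetic) difference is in the easy direction, where you invoke the FKG inequality of Theorem~\ref{thm:fkg} for the positivity of the correlation between $\{\sup_{[0,1]}R^\alpha\le 1\}$ and $\{\sup_{[1,T]}R^\alpha\le 0\}$, while the paper uses Slepian's lemma; both are valid here and yield the same bound.
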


\begin{proof} We can e.g.\ use the function $f':=\ind_{[0,1]} \Gamma(\alpha+1) (\alpha+1)$, for which $\int_0^\infty f'(s)^2\dd s <\infty$. Then $g(t)=\Gamma(\alpha+1) (\alpha+1) \cI_{\alpha+1}(\ind_{[0,1]})_t = t^{\alpha+1}- (t-1)^{\alpha+1}\geq 1$ for all $t\geq 1$ and all $\alpha>0$, i.e.\
$$
F(t)\geq 1-g(t),\qquad \text{for all $t\in[0,T]$};
$$
and thus
$$
\pr{ \forall t\in[0,T] : R^\alpha_t + g(t) \leq 1} \,\leq\, \pr{\forall t\in[0,T] :  R^\alpha_t \leq F(t)} \,=\, \pr{ \sup_{t\in [1,T]} R^\alpha_t \leq 0}.
$$
Corollary~\ref{cor:drift3} therefore implies one bound in the assertion.

The opposite estimate can be obtained via Slepian's lemma (see the version in Corollary~3.12 in \cite{LT}):
$$
\pr{\sup_{t\in[0,T]} R^\alpha_{t} \leq 1} \geq \pr{\sup_{t\in[0,1]} R^\alpha_{t} \leq 1} \, \pr{\sup_{t\in[1,T]} R^\alpha_{t} \leq 1} \geq \pr{\sup_{t\in[0,1]} R^\alpha_{t} \leq 1} \, \pr{\sup_{t\in[1,T]} R^\alpha_{t} \leq 0}.
$$
\end{proof}
\end{example}

\subsection{Proof of Theorem~\ref{thm:expfibmdecreasing}} \label{sec:exisproof}
Here we give the proof of Theorem~\ref{thm:expfibmdecreasing}. Due to Theorem~\ref{thm:main} it is sufficient to consider the question of the survival exponent in the case when $X$ is a Brownian motion. Therefore, we consider $R^\alpha=\cI_\alpha(X)$, where $X$ is a Brownian motion.

\emph{Proof of the existence:} We use the approach from \cite{lishao2004aop} involving the Lamperti transform. However, we employ the new drift argument developed in Example~\ref{exa:drifttrick} rather than calculations involving the Slepian lemma from \cite{lishao2004aop}, which do not seem to be easily transferable to the present situation. Note that the Lamperti transform of $R^\alpha$,
$$
Y_t := e^{-t(\alpha+1/2)} R^\alpha_{e^t},\qquad t\geq 0,
$$
is a continuous, zero mean, stationary Gaussian process with positive correlations $\E [Y_t Y_0]\geq 0$. Therefore, Slepian's lemma and the standard subadditivity argument (see Proposition~3.1 in \cite{lishao2004aop}) show that the following limit exists and equals the supremum:
\begin{equation} \label{eqn:subadd}
\lim_{T\to\infty} \frac{1}{T}\, \log \pr{ \sup_{t\in[0,T]} Y_t \leq 0} = \sup_{T>0} \frac{1}{T} \, \log \pr{ \sup_{t\in[0,T]} Y_t \leq 0}.
\end{equation}
We shall prove that this limit is actually a representation for $\theta(\alpha)$. To see this, note that
$$
\pr{ \sup_{t\in[0,\log T]} Y_t \leq 0} = \pr{ \sup_{t\in[0,\log T]} e^{-t(\alpha+1/2)} R^\alpha_{e^t} \leq 0} = \pr{ \sup_{t\in[0,\log T]} R^\alpha_{e^t} \leq 0} = \pr{ \sup_{t\in[1,T]} R^\alpha_{t} \leq 0}.
$$
Therefore, the limit in (\ref{eqn:subadd}) equals
$$
\lim_{T\to\infty} \frac{1}{T} \,\log \pr{ \sup_{t\in[0,T]} Y_t \leq 0} = \lim_{T\to\infty} \frac{1}{\log T}\, \log \pr{\sup_{t\in[1,T]} R^\alpha_{t} \leq 0}.
$$
It remains to be shown that
$$
 \pr{\sup_{t\in[1,T]} R^\alpha_{t} \leq 0}=T^{-\theta+o(1)} \qquad\text{implies}\qquad \pr{\sup_{t\in[0,T]} R^\alpha_{t} \leq 1}=T^{-\theta+o(1)}.
$$
However, this was already shown in Corollary~\ref{cor:drifttrick}.

\emph{Proof of the monotonicity:} Let $\gamma\geq 0$ and $0<\alpha<1$. We will show that $\theta(\gamma)\geq \theta(\gamma+\alpha)$.

If $R^\gamma_s\leq \ind_{[0,1]}(s)$, for all $s\in[0,T]$, then
$$
R^{\alpha+\gamma}_t = \cI_\alpha(R^{\gamma})_t \leq \begin{cases}
                                                  \frac{1}{\Gamma(\alpha)}\int_0^t \alpha (t-s)^{\alpha-1} \dd s = \frac{t^\alpha}{\alpha \Gamma(\alpha)} \leq \frac{1}{\alpha \Gamma(\alpha)}  & t\leq 1\\
                                                  \frac{1}{\Gamma(\alpha)}\int_0^1 \alpha (t-s)^{\alpha-1} \dd s = \frac{t^\alpha - (t-1)^\alpha}{\alpha\Gamma(\alpha)}\leq  \frac{1}{\alpha\Gamma(\alpha)} & t\geq 1,\\
                                                 \end{cases}
$$
since $\alpha<1$. Therefore, using also the self-similarity (set $\lambda:=(\alpha \Gamma(\alpha))^{1/(\alpha+\gamma+1/2)}$),
\begin{align*}
& \pr{\forall s\leq T :  R^\gamma_s \leq \ind_{[0,1]}(s)} \\
& \leq \pr{\forall s\leq T : R^{\alpha+\gamma}_s \leq \frac{1}{\alpha\Gamma(\alpha)}} = \pr{\forall s\leq \lambda T  : R^{\alpha+\gamma}_s \leq 1} = T^{-\theta(\alpha+\gamma)+o(1)}.
\end{align*}
The left-hand side is treated with Corollary~\ref{cor:drift3} ($f'=(\gamma+1)\Gamma(\gamma+1) \ind_{[0,1]}$) showing that it behaves as $T^{-\theta(\gamma)+o(1)}$. This shows the monotonicity of the survival exponent.


\bibliographystyle{plain}

\end{document}